\g@addto@macro\normalsize{%
\setlength\abovedisplayskip{4pt}
\setlength\belowdisplayskip{4pt}
\setlength\abovedisplayshortskip{4pt}
\setlength\belowdisplayshortskip{4pt}
}
\newtheorem{theorem}{Theorem}[section]
\newtheorem{lemma}[theorem]{Lemma}
\newtheorem{proposition}[theorem]{Proposition}
\theoremstyle{definition}
\newtheorem{remark}[theorem]{Remark}
\newcommand{\rone}{\mathbb{R}}
\newcommand{\rtwo}{{\mathbb R^2}}
\newcommand{\cpx}{\mathbb C}
\newcommand{\nat}{\mathbb N}
\newcommand{\calA}{{\mathcal A}}
\newcommand{\calF}{{\mathcal F}}
\newcommand{\calL}{{\mathcal L}}
\newcommand{\calX}{{\mathcal X}}
\newcommand{\tilF}{{\widetilde F}}
\newcommand{\tilu}{{\widetilde u}}
\newcommand{\tilphi}{{\widetilde \phi}}
\newcommand{\tilvp}{{\widetilde \varphi}}
\newcommand{\dist}{{\mathrm{dist }}}
\newcommand{\supp}{{\mathrm{supp }}}
\newcommand{\Uone}{{\mathrm{U}(1)}}
\newcommand{\SUn}{{\mathrm{SU}(n)}}
\newcommand{\al}{{\alpha}}
\newcommand{\e}{{\varepsilon}}
\newcommand{\ga}{{\gamma}}
\newcommand{\la}{{\lambda}}
\newcommand{\ve}{{\varepsilon}}
\newcommand{\vp}{{\varphi}}
\newcommand{\Om}{{\Omega}}
\newcommand{\pa}{\partial}
\newcommand{\qand}{{\quad \mbox{and} \quad}}
\newcommand{\qas}{{\quad \mbox{as} \quad}}
\newcommand{\qfor}{{\quad \mbox{for} \quad}}
\newcommand{\qin}{{\quad \mbox{in} \quad}}
\newcommand{\qon}{{\quad \mbox{on} \quad}}
\newcommand{\qwhere}{{\quad \mbox{where} \quad}}
\begin{document}

\title{\textbf{On a system of  multi-component Ginzburg-Landau vortices}}

\author{Rejeb Hadiji  \thanks{Univ Paris Est Creteil, CNRS, LAMA, F-94010 Creteil, France. \newline \indent \,~
 Univ Gustave Eiffel, LAMA, F-77447 Marne-la-Vall\'ee, France. \newline \indent \,~
 email: rejeb.hadiji@u-pec.fr.} $\,$ and $\,$
 Jongmin Han\thanks{Department of Mathematics, Kyung Hee University, 
 Seoul, 02447, Korea.\newline \indent \,~
 email:  jmhan@khu.ac.kr.} $\,$ and $\,$
Juhee Sohn\thanks{College of General Education,  Kookmin University,  Seoul  02707, Korea. \newline \indent \,~
 email: jhson37@kookmin.ac.kr.}
}
 \date{}
\maketitle

\begin{abstract}
We study the asymptotic behavior of solutions for $n$-component Ginzburg-Landau equations as $\ve \to 0$.
We prove that the minimizers converges locally in any $C^k$-norm to a solution of a system of   generalized harmonic map equations.
\end{abstract}

\small
\noindent{MSC2000 : 35B40, 35J60, 35Q60.}

\noindent{Keywords : $n$-component Ginzburg-Landau equations, semi-local gauge field model, asymptotic behavior of solutions }

\normalsize

%%%%%%%%%%%%%%%%%%%%%%%%%%%%%%%%%%%%%%%%%%%%%%%%%%%%%%%%%%%%%%%%%%%%%%%%%%
%%%%%%%%%%%%%%%%%%%%%%%%%%%%%%%%%%%%%%%%%%%%%%%%%%%%%%%%%%%%%%%%%%%%%%%%%%%%
\section{Introduction}
\setcounter{equation}{0}

The classical Ginzburg-Landau model   describes  the macroscopic theory for phenomena on superconductivity at low temperature.
It is derived from the Helmholtz free energy that consists of a complex order parameter $ u $ and the magnetic potential $A$ \cite{DGP92}.
In the physical literature, $| u |^2$ represents the density of the superconducting electron
pairs  in the superconducting material.
The state $| u |^2=0$ implies that  material remains in normal conducting state.
Whereas   the material  is superconducting if $| u |^2=1$.

The Ginzburg-Landau theory also gives a good insight for understanding various topological defects arising from cosmology.
The superconducting state is achieved at the vacuum level of the potential and it can be regarded as a broken symmetry during a phase transition.
For instance, this idea is realized by the (special) relativistic extension of the planar  Ginzburg-Landau theory, called the  Abelian-Higgs model \cite{NO73}.
 This model describes a charged scalar field $ \psi $ interacting with a $\Uone$ gauge  field $A$, and allows vortex solutions which are charged magnetically  but  electrically neutral.

The   Abelian-Higgs  model  is considered in the (2+1)-dimensional Minkowski
space $\rone^{1,2}$  with the metric  $\text{diag}(-1,1,1)$. The metric is used to raise or lower indices.
The Lagrangian density for the model is defined as
\begin{equation}
\label{eq:L0}
\calL_0=-\frac{1}{4}F_{\mu\nu}F^{\mu\nu}+\frac{1}{2}D_\mu u \overline{D^\mu u }-\frac{1}{8\ve^2}(| u |^2-1)^2.
\end{equation}
Here, $\mu,\nu=0,1,2 $, $\ve>0$ is  the Higgs coupling constant, $ u :\rone^{1,2}\to\cpx$ is the Higgs field and $A_\mu:\rone^{1,2}\to\rone$ is $\Uone$ gauge fields.
In addition,  $F_{\mu\nu}=\partial_\mu A_\nu-\partial_\nu A_\mu$ represents the electromagnetic field, $D_\mu u =\partial_\mu u +i A_\mu u $ is the covariant derivative.
The Lagrangian $\calL_0$ is invariant under the local $\Uone$ gauge transform
\[u\mapsto e^{i\alpha} u , \quad A_\mu\mapsto A_\mu- \partial_\mu\alpha
\]
for any smooth function $\alpha:\rone^{1,2}\to\rone$.
The Euler-Lagrange equations are
\begin{equation}
\label{eq:EL}
\left\{ \begin{aligned}
D_\mu D^\mu u &~=\frac{1}{ \ve^2}(| u |^2-1)u\\
\partial^\nu F_{\mu\nu}&~=\frac{i}{2\ve^2}\big[ u \overline{D_\mu u }-\overline{ u }D_\mu u \big],
\end{aligned}
\right.
\end{equation}
which are the (special) relativistic Ginzburg-Landau equations.

As a direct generalization of \eqref{eq:L0}, Vachaspati and  Achucarro proposed a new model in \cite{VaAch91}   that contains $n$ scalar fields:
\[\Psi= \big( u_1, \cdots u_n \big), \qwhere   u_k:\rone^{1,2}\to\cpx \qfor k=1,\cdots,n.
\]
The Lagrangian $\calL_0$ is modified as
\begin{equation}
\label{eq:L}
\calL=-\frac{1}{4}F_{\mu\nu}F^{\mu\nu}+\frac{1}{2}D_\mu\Psi\overline{D^\mu\Psi}-\frac{1}{8\ve^2}\big(|\Psi|^2-n\big)^2.
\end{equation}
It is easy to show that $\calL$ is invariant under the local $\Uone$ gauge transformation
\[\Psi\mapsto e^{i\alpha}\Psi,\quad A_\mu \mapsto A_\mu- \pa_\mu \alpha
\]
for any smooth function $\alpha:\rone^{1,2}\to\rone$.
It is also invariant under the global $\SUn$ gauge transformation
\[\Psi\mapsto e^{i\alpha_k\tau^k}\Psi,
\]
where $\alpha_k\in\rone$ for $k=1,\cdots,n^2-1$ and  $\{\tau^k\}_{k=1}^{n^2-1}$ are the generators of the Lie algebra $\mathfrak{su}(n)$.
The Euler-Lagrange equations are obtained as
\begin{equation}
\label{eq:GL-n}
\left\{ \begin{aligned}
D_\mu D^\mu u_k&~=\frac{1}{  \ve^2}  \Big( \sum_{j=1}^n |u_j|^2- n \Big) u_k,\\
\partial^\nu F_{\mu\nu}&~=\frac{i}{2\ve^2} \sum_{j=1}^n  \big(u_j\overline{D_\mu u_j}-\overline{u}_jD_\mu u_j\big),
\end{aligned}
\right.
\end{equation}
which we will refer to as $n$-component  Ginzburg-Landau equations.

The model  \eqref{eq:L} is useful in the study of some issues in cosmology, for instance, the formation of cosmic strings that have both local and global natures \cite{VaAch91}.
So, the string solutions for \eqref{eq:L} are called semilocal.
The semilocal string solutions for \eqref{eq:L} are similar to those for \eqref{eq:L0} that reflect the local gauge transformation.
But  they have additional features that have some resemblance to global defect.
For further physical implication of the model \eqref{eq:L}, one may refer to  \cite{Hi92,VaAch91}.

In this article, we are interested in static solutions for \eqref{eq:EL} and \eqref{eq:GL-n}.
In particular, we assume that the electromagnetic fields vanish, that is, $A_\mu=0$ for $\mu=0,1,2$.
Then, the main topic is to study the asymptotic behavior of solutions for \eqref{eq:EL} and \eqref{eq:GL-n} as $\ve \to 0$.
For the case \eqref{eq:EL},  there have been lots of researches on this topic after the seminal work of Bethuel-Brezis-Helein \cite{BBH93,BBH94}.
More specifically, let $\Om \subset \rtwo$ be a smooth bounded simply connected domain.
Given  a smooth map
\[ g  : \pa \Om \to S^1=\{ z \in \cpx : |z|=1\} \quad\text{with}\quad d=\deg (g,\pa\Om) 
\]
the   Ginzburg-Landau equations  \eqref{eq:EL} with $A_\mu=0$ reduce to
\begin{equation}
\label{eq:calssical GL}
\left\{
\begin{aligned}
-\Delta u& = \frac{1}{\ve^2} u (1-|u|^2 ) \qin \Omega, \\
u&=g  \qon \pa\Omega.
\end{aligned}
\right.
\end{equation}
The associated energy  functional is
\begin{equation}
\label{eq:ftnal Eb}
E_{\ve,\Om}^b (u) = \frac{1}{2} \int_\Om |\nabla u|^2  dx + \frac{1}{4\ve^2} \int_\Om (1-|u|^2)^2 dx .
\end{equation}
Then, $E_{\ve,\Om}^b$ has a minimizer $u_{\ve,g}^b$ in
\[ H^1_g (\Om; \cpx)= \big\{ u \in   H^1(\Om; \cpx) : u=g  \mbox{ on } \pa \Om \big\},
\]
that is
\begin{equation}\label{eq:BBH minimizer}
E_{\ve,\Om}^b (u_{\ve,g}^b) = \inf \{ E_{\ve,\Om}^b (u) : u \in H^1_g(\Om; \cpx)\}.
\end{equation}
We often write $H^1_g $ instead of $H^1_g (\Om; \cpx)$ if there is no confusion.

The asymptotic behavior of   solutions of \eqref{eq:calssical GL} as $\ve \to 0$
  has attracted lots of interest for three decades.
  The behavior of global minimizers for $u_{\ve,g}^b$ was studied in detail by
Bethuel, Brezis and H\'{e}lein \cite{BBH93,BBH94}.
If $d=0$, $u_{\ve,g}^b$ converges to a harmonic map that minimizes
\[  \int_\Om | \nabla u |^2dx
\]
over the space
\[  H^1_g (\Om;S^1) = \{ u \in W^{1,2} (\Om , S^1 )  :  u=g  \mbox{ on } \partial \Om  \}  .
\]
This problem has a solution $u_0^b$ satisfying
\begin{equation}
\label{eq:u0 GL}\left\{
\begin{aligned}
-\Delta u_0^b& = u_0^b |\nabla u_0^b|^2 \quad \text{on}\quad \Om,\\
u_0^b&=g  \qon \pa\Omega, \\
|u_0^b|&=1 \quad\text{ on } \Om.
\end{aligned}
\right.
\end{equation}
When $d \neq 0$, the analysis is more delicate because $ H^1_g (\Om;S^1) = \emptyset $.
There exists a set $\{ a_1, \cdots , a_d \} \subset \Om $ such that
up to a subsequence, $u_{\ve,g}^b$ converges  to the   map  $u_*$ that satisfies the harmonic map equation \cite{BBH94, HeWo08}
\begin{equation}
\label{eq:u* GL}
\left\{
\begin{aligned}
-\Delta u_*& = u_* |\nabla u_*|^2 \quad \text{on}\quad \Om\backslash\{a_1, \cdots , a_d\},\\
u_*&=g  \qon \pa\Omega, \\
|u_*|&=1 \quad\text{ on } ~\Om\backslash\{a_1, \cdots , a_d\}.
\end{aligned}
\right.
\end{equation}
Moreover, we have
\begin{equation}
\label{eq:harmonic-sing}
u_* (x) = \frac{x-a_1}{|x-a_1|} \cdots  \frac{x-a_d}{|x-a_d|} e^{i \psi(x)},
\end{equation}
where $\psi$ is harmonic on $\Om$  and  the   singularities $a_1, \cdots , a_d $ of $u_*$  minimizes the associated renormalized energy.
%\textcolor{red}{
We refer to \cite{SaSe97} for the study of the Ginzburg-Landau  model  with and without a magnetic fields that describe superconductivity.
%}

Now, we turn to the $n$-component  Ginzburg-Landau equations \eqref{eq:GL-n}.
If we assume $A_\mu=0$, then   \eqref{eq:GL-n} is reduced to
\begin{equation}
\label{eq:semilocal GL}
\left\{
\begin{aligned}
-\Delta u_i& = \frac{1}{\ve^2} u_i \Big(n-\sum_{j=1}^n |u_j|^2\Big) \qin \Omega,\\
u_i&=g_i  \qon \pa\Omega,
\end{aligned}
\right.
\end{equation}
for each $i=1,\cdots,n$.
One may regard \eqref{eq:semilocal GL} as a direct extension of  \eqref{eq:calssical GL} to $n$-component equations.
Here,  $g_1, \cdots, g_n : \pa \Om \to S^1 $ are  smooth maps such that
\begin{equation}
\label{eq:deg gj}
\deg (g_j) :=\deg (g_j,\pa\Om)=d_j   \in \nat\cup\{0\}  \qfor  j=1,\cdots, n.
\end{equation}
The system \eqref{eq:semilocal GL} is the Euler-Lagrange equations of  the   functional
\begin{equation}
\label{eq:ftnal E}
 E_{\ve,\Om} (u_1,\cdots,u_n) = \frac12 \int_\Om \sum_{j=1}^n |\nabla u_j|^2 dx + \frac{1}{4\ve^2} \int_\Om \Big(n-\sum_{j=1}^n |u_j|^2\Big)^2 dx
\end{equation}
for a pair of maps $(u_1,\cdots,u_n) \in H^1_{g_1} \times\cdots\times  H^1_{g_n}$.
For simplicity, we write $E_\ve^b$ and $E_\ve$ instead of $E_{\ve,\Om}^b$ and $E_{\ve,\Om}$ if there is no confusion on the domain $\Om$.
It is easy to check that
\begin{equation}
\label{eq:min prob}
\inf  \big\{ E_\ve (u_1,\cdots,u_n) : (u_1,\cdots,u_n) \in H^1_{g_1} \times\cdots\times H^1_{g_n} \big\}
\end{equation}
is achieved by some $(u_{1,\ve},\cdots,u_{n,\ve})  \in H^1_{g_1} \times  \cdots\times H^1_{g_n}$.
 We also denote by  $u_{j,\ve}^b$ a minimizer  for $E_\ve^b $ on $H^1_{g_j}$.

The purpose of this paper is to study the asymptotic behavior of minimizers $(u_{1,\ve},\cdots,u_{n,\ve})$ as $\ve\to0$.
In this study, there is a  remarkable difference between \eqref{eq:calssical GL} and \eqref{eq:semilocal GL}.
First, we consider the possible limit equation which could be a $n$-component generalization of \eqref{eq:u* GL} in some sense.
For \eqref{eq:calssical GL}, if $d\ne 0$, then the limit function $u_*$ in  \eqref{eq:harmonic-sing} has $d$ singularities.
However, even for the case $(d_1, \cdots, d_n)\ne (0,\cdots,0)$, the limit functions for \eqref{eq:semilocal GL}  turns out to  have no singularities.
This difference is related to the nonexistence of singular harmonic map in $H^1$.
In fact, we have $ H^1_g (\Om;S^1) = \emptyset $ for the single equation \eqref{eq:calssical GL}.
For the system  \eqref{eq:semilocal GL}, it is natural to define a function space analogously:
\begin{align*}
 &\calX(g_1,\cdots,g_n;\Om)\\
 =&  \Big\{  (u_1,\cdots,u_n) \in H^1_{g_1}(\Om; \mathbb C) \times\cdots\times H^1_{g_n} (\Om; \mathbb C) ~:~\sum_{j=1}^n |u_j |^2 = n~  \text{ a.e. on } \Om\Big\}.
\end{align*}
We write  $\calX(g_1,\cdots,g_n )$ if there is no confusion on domains.

The asymptotic behavior of minimizers for $E_\ve$ is closely related to the maps in $\calX(g_1,\cdots,g_n )$.
Since it is expected that $ |u_{1,\ve}|^2 +\cdots + |u_{n,\ve}|^2\to n$ as $\ve \to 0$, the limit functions, say, $( u_1^*\cdots,  u_n^*)$ will satisfy $ | u_1^*|^2 +\cdots +| u_n^*|^2= n$ a.e. on $\Om$.
Thus, it is important to know whether $\calX(g_1,\cdots,g_n )$ is nonempty or not.
We can see that if $\Psi=(u_1,\cdots,u_n) \in \calX(g_1,\cdots,g_n )$, then
\[\tilde{\Psi}=\Big( \frac{u_1}{\sqrt{n}}, \cdots,  \frac{u_n}{\sqrt{n}} \Big)~\in ~ S^{2n-1}.
\]
Since the homotopy group $\pi_1 (S^{2n+1})$ is trivial, there is no topological obstruction and
we expect that the limit problem will not have singularities.
More precisely, the next theorem tells us that $\calX(g_1,\cdots,g_n )$ is nonempty if $n\ge 2$.
%
%
%We can understand  \eqref{inf couple} as a harmonic map:
%\[ \Psi=(u_1,\cdots,u_n) : \Om \to \widetilde{S}^{2n-1}  = \big\{ (v_1,\cdots,v_n) ~:~ |v_1|^2+ \cdots+|v_n|^2=n\big\}.
%\]
%We used the notation $\widetilde{S}^{2n-1}$ to denote the $(2n-1)$-sphere with radius $\sqrt{n}$.
%Then, does $\beta$ has a unique minimizer from harmonic map theory \cite{HeWo08}?

%%%%%%%%%%%%
\begin{theorem}\label{thm:X(g1,g2)-2}
If $d_j\ge 0$ for all $1\le j \le n$, then   $\calX(g_1,\cdots,g_n ) \ne \emptyset$.
Furthermore, if
\begin{equation}
\label{I(u,v)}
 I(u_1,\cdots,u_n) = \sum_{j=1}^n\int_\Om |\nabla u_j|^2  dx,
\end{equation}
 the minimization problem
\begin{equation}
\label{inf couple}
\begin{aligned}
\beta (g_1,\cdots, g_n) &=  \inf  \big\{ I(u_1,\cdots,u_n)    :  (u_1,\cdots,u_n) \in \calX(g_1,\cdots,g_n)\big\},
\end{aligned}
\end{equation}
is achieved in  $\calX(g_1,\cdots,g_n) $.
\end{theorem}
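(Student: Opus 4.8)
The plan is to handle the two assertions separately: nonemptiness of $\calX(g_1,\cdots,g_n)$ by an explicit extension into the sphere $S^{2n-1}$, and attainment of $\beta(g_1,\cdots,g_n)$ by the direct method. For \emph{nonemptiness}, recall from the discussion before the statement that an element $(u_1,\cdots,u_n)\in\calX(g_1,\cdots,g_n)$ is the same, after dividing by $\sqrt n$, as an $H^1$ map $\overline{\Om}\to S^{2n-1}$ whose boundary trace is $G:=(g_1,\cdots,g_n)/\sqrt n$. Since the $g_j$ are smooth, $G:\pa\Om\to S^{2n-1}$ is smooth; as $\pa\Om$ is one dimensional while $\dim S^{2n-1}=2n-1\ge 3$ for $n\ge 2$, the image $G(\pa\Om)$ has measure zero, so I may fix a point $p\in S^{2n-1}\setminus G(\pa\Om)$ and let $\Phi:S^{2n-1}\setminus\{p\}\to\rone^{2n-1}$ be stereographic projection, a diffeomorphism onto all of $\rone^{2n-1}$. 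Then $\Phi\circ G:\pa\Om\to\rone^{2n-1}$ is smooth; taking any smooth extension to $\overline{\Om}$ (for instance its harmonic extension) and composing with $\Phi^{-1}$ yields a smooth map $\overline{\Om}\to S^{2n-1}$ equal to $G$ on $\pa\Om$. A smooth map from the bounded planar domain $\Om$ into the compact manifold $S^{2n-1}$ is automatically in $H^1$, so rescaling by $\sqrt n$ gives an element of $\calX(g_1,\cdots,g_n)$. This is exactly where $n\ge 2$ enters, namely through $\pi_1(S^{2n-1})=0$, which lets the boundary map miss a point and extend; for $n=1$ with $d_1\ne 0$ no such extension exists.

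For \emph{attainment of $\beta$}, I would run the direct method on $I$ over $\calX(g_1,\cdots,g_n)$. Nonemptiness gives $\beta(g_1,\cdots,g_n)\in[0,\infty)$, so I take a minimizing sequence $\{(u_1^{(k)},\cdots,u_n^{(k)})\}_k\subset\calX$ with $I(u_1^{(k)},\cdots,u_n^{(k)})\to\beta$. The constraint $\sum_j|u_j^{(k)}|^2=n$ forces $|u_j^{(k)}|\le\sqrt n$ a.e., so each $u_j^{(k)}$ is bounded in $L^2(\Om)$; together with the uniform bound on $\sum_j\int_\Om|\nabla u_j^{(k)}|^2$ this yields a uniform $H^1(\Om;\cpx)$ bound. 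Passing to a subsequence, $u_j^{(k)}\rightharpoonup u_j^*$ weakly in $H^1$, and by Rellich--Kondrachov $u_j^{(k)}\to u_j^*$ strongly in $L^2$ and, along a further subsequence, a.e.\ on $\Om$.

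It remains to pass to the limit, which is where the one genuine difficulty lies. The boundary values are inherited because each $H^1_{g_j}$ is a closed affine subspace of $H^1(\Om;\cpx)$, hence weakly closed, so $u_j^*\in H^1_{g_j}$. The delicate point is the pointwise constraint: the set $\{\sum_j|u_j|^2=n\}$ is \emph{not} convex and therefore is not closed under weak $H^1$ convergence alone, so a purely weak-topology argument fails. Here the almost-everywhere convergence is indispensable, giving $\sum_{j=1}^n|u_j^*|^2=\lim_k\sum_{j=1}^n|u_j^{(k)}|^2=n$ a.e., whence $(u_1^*,\cdots,u_n^*)\in\calX$. Finally $I$ is convex and hence weakly lower semicontinuous, so $I(u_1^*,\cdots,u_n^*)\le\liminf_k I(u_1^{(k)},\cdots,u_n^{(k)})=\beta$, while membership in $\calX$ forces the reverse inequality; thus $\beta(g_1,\cdots,g_n)$ is achieved. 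The nonconvexity of the sphere constraint is the only real obstacle, and it is precisely what makes the strong $L^2$ compactness essential rather than merely convenient.
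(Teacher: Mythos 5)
Your proof is correct, but the nonemptiness argument is genuinely different from the paper's. The paper constructs an element of $\calX(g_1,\cdots,g_n)$ by choosing smooth extensions $v_j\in H^1_{g_j}(\Om;\cpx)$ with $\inf_\Om\big(\sum_k|v_k|^2\big)>0$ and then normalizing, $w_j=\sqrt n\,v_j/\big(\sum_k|v_k|^2\big)^{1/2}$ --- i.e.\ it projects a nonvanishing $\cpx^n$-valued extension radially onto the sphere $\{\sum|u_j|^2=n\}$, and then checks that $\int_\Om|\nabla w_j|^2$ is finite by the explicit formula for $\nabla w_j$. You instead work intrinsically on $S^{2n-1}$: the boundary map $G=(g_1,\cdots,g_n)/\sqrt n$ misses a point of $S^{2n-1}$ because $\dim\pa\Om=1<2n-1$, so you extend through a stereographic chart. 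Both constructions exploit $n\ge 2$ in essentially the same way (for the paper, the existence of a common-zero-free extension; for you, the existence of an omitted point), but yours has the advantage of not needing the auxiliary gradient estimate, since a smooth map on $\overline\Om$ is trivially $H^1$, and it makes the role of the target's topology more transparent; the paper's is more hands-on and produces the competitor used implicitly elsewhere. For the attainment of $\beta$, the paper simply invokes ``a standard variational argument,'' and your direct-method write-up is exactly the argument intended; you are right that the only nontrivial point is that the sphere constraint is nonconvex, hence not weakly closed, so one must pass through Rellich--Kondrachov and a.e.\ convergence to recover $\sum_j|u_j^*|^2=n$. No gaps.
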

\begin{proof}
We choose smooth functions $v_j\in H_{g_j}^1(\Om;\cpx)$ such that $ \inf_\Om(|v_1|^2+\cdots+|v_n^2|)>0$ on $\Om$.
Set
\[w_j=\frac{\sqrt{n}v_j}{ \big(\sum_{k=1}^n|v_k|^2\big)^{1/2}}\quad\mbox{for each} \quad j=1,\cdots,n.
\]
Then,     $(w_1,\cdots, w_n) \in \calX(g_1,\cdots,g_n)  \ne \emptyset$.
Moreover,
\[ \int_\Om |\nabla w_j|^2~dx \le C  \int_\Om  \Big(  \frac{|\nabla v_j|^2}{  \sum_k|v_k|^2}  + \frac{|v_j|^2 (\sum_k \nabla |v_k|^2 )^2 }{ (\sum_k|v_k|^2)^3}  \Big) dx \le C.
\]
Thus, $\beta (g_1,\cdots, g_n)<\infty$ and it is achieved in $\calX(g_1,\cdots,g_n) $ by a standard variational argument.
\end{proof}
%%%%%%%%%%%%%%%%%%%%

The associated Euler-Lagrange equations for \eqref{I(u,v)} are
\begin{equation}\label{eq:u1v1 system}
\left\{
\begin{aligned}
-\Delta u_j& = \frac{1}{n}u_j \sum_{k=1}^n |\nabla u_k|^2  \qin \Om, \quad u_j=g_j \qon \pa \Om,\\
& \sum_{j=1}^n|u_j|^2 =n \qin \Om.
\end{aligned}\right.
\end{equation}
Indeed, if $(u_1,\cdots,u_n)$ is a critical point of $I$, then we are led to
\[0=i'(0) = - \sum_{j=1}^n\int_\Om \Big[    \Delta u_j + \frac{1}{n}u_j \sum_{k=1}^n|\nabla u_k|^2  \Big]\psi_jdx
\]
by  using the fact $\nabla ( \sum_{j=1}^n|u_j |^2)=0$.
Here, we set $ i(t) = I \big(u_1(t),\cdots,u_n(t)\big)$ for $( \psi_1,\cdots,\psi_n) \in H^1_0 (\Om)\times\cdots\times H^1_0(\Om)$ and
\[ u_j(t) = \frac{n (u_j+t\psi_j)}{\sqrt{n \sum_{k=1}^n|u_k+t\psi_k|^2}}  ~\in~ \calX(g_1,\cdots,g_n).
\]
We will refer to \eqref{eq:u1v1 system} as $n$-component harmonic map equations.
The condition  $|u_*|=1$ in \eqref{eq:u* GL} makes its zeros singular.
Whereas the condition $|u_1|^2+\cdots+|u_n|^2=n$ in \eqref{eq:u1v1 system} do not force
  their zeros  singular since it does  not mean $|u_1|=\cdots=|u_n|=1$.
We also note from \eqref{eq:u1v1 system} that $u_1,\cdots, u_n$ share the same singularities if any.

Now, we are ready to state the first   main result of this paper.
%%%%%%%%%%%%%%%%%%%%%%%%%%%%%%%%
\begin{theorem}\label{thm:conv}
Let $(u_{1,\ve},\cdots,u_{n,\ve})$ be a minimizer for \eqref{eq:min prob}.
Then, as $\ve \to 0$,  up to a subsequence,
\begin{align}
\label{eq:conv C1alpha}
   (u_{1,\ve},\cdots,u_{n,\ve}) \to  ( u_1^*,\cdots, u_n^*)  \qin C^{1,\alpha}(\bar\Om)\times\cdots\times C^{1,\alpha}(\bar\Om) .
\end{align}
Here, $( u_1^*,\cdots, u_n^*)$ is a minimizer of $I$ defined by \eqref{I(u,v)} and satisfies a system \eqref{eq:u1v1 system} of harmonic map equations.
Moreover, for any positive integer $k$,
\begin{align}
\label{eq:conv Ck}
   (u_{1,\ve},\cdots,u_{n,\ve}) \to  ( u_1^*,\cdots, u_n^*)  \qin C^k_{loc}(\Om)\times\cdots\times C^k_{loc}(\Om)   ,\\
   \label{eq:conv grad}
  \frac1{\ve^2} \Big( n-\sum_{j=1}^n |u_{j,\ve}|^2 \Big) \to \frac{1}{n} \sum_{j=1}^n |\nabla  u_j^*|^2   \qin C^k_{loc}(\Om)\times\cdots\times C^k_{loc}(\Om).
\end{align}
\end{theorem}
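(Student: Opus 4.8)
The plan is to adapt the classical $d=0$ analysis of Bethuel--Brezis--H\'elein, in which a uniform-in-$\ve$ energy bound rules out vortices and forces smooth convergence, working throughout with the collective modulus $\rho_\ve^2:=\sum_{j=1}^n|u_{j,\ve}|^2$ and the Lagrange-multiplier function $\lambda_\ve:=\ve^{-2}(n-\rho_\ve^2)$. First I would record the uniform energy bound: since $\calX(g_1,\cdots,g_n)\neq\emptyset$ by Theorem~\ref{thm:X(g1,g2)-2}, testing $E_\ve$ against a minimizer of $I$ in $\calX$ (whose potential term vanishes) gives $E_\ve(u_{1,\ve},\cdots,u_{n,\ve})\le\tfrac12\beta(g_1,\cdots,g_n)$, so that $\sum_j\into|\nabla u_{j,\ve}|^2\le\beta$ and $\into(n-\rho_\ve^2)^2\le C\ve^2$. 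Computing $\Delta\rho_\ve^2$ from \eqref{eq:semilocal GL} and setting $\xi_\ve:=n-\rho_\ve^2$ yields
\[
-\Delta\xi_\ve+\frac{2\rho_\ve^2}{\ve^2}\,\xi_\ve=2\sum_{j=1}^n|\nabla u_{j,\ve}|^2\ge0,\qquad \xi_\ve=0 \text{ on } \pa\Om,
\]
so the maximum principle gives $0\le\rho_\ve^2\le n$, hence $|u_{j,\ve}|\le\sqrt n$ and $\lambda_\ve\ge0$.

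With these bounds, a subsequence of $(u_{1,\ve},\cdots,u_{n,\ve})$ converges weakly in $H^1$ and strongly in $L^2$ to some $(u_1^*,\cdots,u_n^*)$; since $\into(n-\rho_\ve^2)^2\to0$ we get $\sum_j|u_j^*|^2=n$ a.e., so $(u_1^*,\cdots,u_n^*)\in\calX(g_1,\cdots,g_n)$. Lower semicontinuity of $I$ together with $I(u_1^*,\cdots,u_n^*)\ge\beta(g_1,\cdots,g_n)$ forces $I(u_1^*,\cdots,u_n^*)=\beta(g_1,\cdots,g_n)=\lim_\ve\sum_j\into|\nabla u_{j,\ve}|^2$; hence the convergence is strong in $H^1$ and $(u_1^*,\cdots,u_n^*)$ minimizes $I$, so it solves \eqref{eq:u1v1 system}. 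Being an energy-minimizing harmonic map from a planar domain into a sphere, $(u_1^*,\cdots,u_n^*)$ is smooth, and strong $H^1$ convergence to this smooth limit gives that the energy densities do not concentrate: $\int_{B_r(x)}\sum_j|\nabla u_{j,\ve}|^2\le Cr^2+o_\ve(1)$ uniformly for $x$ in compact subsets of $\Om$.

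The heart of the matter, and the step I expect to be the main obstacle, is upgrading this to the uniform $C^{1,\alpha}(\bar\Om)$ and $C^k_{loc}(\Om)$ bounds required by \eqref{eq:conv C1alpha}--\eqref{eq:conv grad}. The difficulty is the singular factor $\ve^{-2}$ in $-\Delta u_{i,\ve}=\lambda_\ve u_{i,\ve}$, for which only the crude bound $|\nabla u_{j,\ve}|\le C/\ve$ (from rescaling and interior/boundary elliptic estimates) is available a priori. Because $\pi_1(S^{2n-1})$ is trivial there is no topological obstruction forcing vortices, and I would exploit this through an $\varepsilon$-regularity estimate for the system: combining a Pohozaev/monotonicity identity for $E_\ve$ with the no-concentration bound above, small local energy on a ball $B_{2r}$ (together with $\ve\le r$) forces $\rho_\ve\ge\sqrt n/2$ and $\sum_j|\nabla u_{j,\ve}|^2\le C/r^2$ on $B_r$. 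Since the local energy is indeed small for $\ve$ small, this yields a uniform lower bound $\rho_\ve\ge c>0$ and a uniform local gradient bound on compact subsets, hence uniform $C^{0,\alpha}$ bounds and $\rho_\ve\to\sqrt n$ uniformly on compacta. Establishing this estimate, with its half-ball version near $\pa\Om$, is the technical core of the proof.

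Once $\rho_\ve\ge c>0$ and the gradients are locally bounded, the remaining steps are routine bootstrap. Writing the modulus equation as
\[
-\frac{\ve^2}{2}\,\Delta\lambda_\ve+\rho_\ve^2\,\lambda_\ve=\sum_{j=1}^n|\nabla u_{j,\ve}|^2,
\]
the maximum principle (with cutoffs) bounds $\lambda_\ve$ in $L^\infty_{loc}$ by $c^{-1}\sum_j|\nabla u_{j,\ve}|^2$, so $\lambda_\ve u_{i,\ve}\in L^\infty_{loc}$ and $L^p$/Schauder estimates give uniform $C^{1,\alpha}$ bounds, which up to $\pa\Om$ (using the smoothness of $g_j$) yield \eqref{eq:conv C1alpha}. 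Differentiating the displayed equation, the positive zeroth-order coefficient $\rho_\ve^2$ dominates while its own derivatives are $O(\ve^2)$, so no negative power of $\ve$ is produced and one obtains, by induction, uniform $C^k_{loc}$ bounds on both $u_{j,\ve}$ and $\lambda_\ve$; by Arzel\`a--Ascoli, the limit being the already-identified $u^*$, this gives \eqref{eq:conv Ck}. Finally, letting $\ve\to0$ in the displayed equation, where $\ve^2\Delta\lambda_\ve\to0$ by the uniform $C^2_{loc}$ bound and $\rho_\ve^2\to n$, yields $n\lambda_0=\sum_j|\nabla u_j^*|^2$, which is precisely \eqref{eq:conv grad}.
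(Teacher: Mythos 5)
Your overall architecture (uniform energy bound by comparison with a map in $\calX(g_1,\cdots,g_n)$, maximum principle for the modulus, strong $H^1$ convergence to a minimizer of $I$, then elliptic bootstrap once the modulus is bounded away from zero) is the same as the paper's, and your first two paragraphs together with your derivation of \eqref{eq:conv grad} essentially reproduce Lemma \ref{lem:infty}, Proposition \ref{prop:conv in H^1} and the end of the paper's proof. The genuine gap is the step you yourself flag as ``the technical core'': the passage from the crude bound $\|\nabla u_{j,\ve}\|_{L^\infty}\le C/\ve$ to a uniform lower bound on $\rho_\ve^2=\sum_j|u_{j,\ve}|^2$. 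You propose an $\ve$-regularity lemma based on ``a Pohozaev/monotonicity identity for $E_\ve$'', but you neither state nor prove such an identity, and on a bounded planar domain with Dirichlet data there is no off-the-shelf interior monotonicity formula to invoke; moreover your small-energy hypothesis would have to include smallness of $\ve^{-2}\int_{B_{2r}}(n-\rho_\ve^2)^2$, which requires the refinement $\ve^{-2}\int_\Om(n-\rho_\ve^2)^2\to 0$ that you never isolate (your first paragraph only records $\int_\Om(n-\rho_\ve^2)^2\le C\ve^2$, which is too weak for any clearing-out). As written, the central estimate of the theorem rests on an unproved black box.

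The paper avoids all of this with a much more elementary argument whose ingredients you already have. Taking $\liminf$ in the energy comparison after strong $H^1$ convergence gives $\ve^{-2}\int_\Om(n-\rho_\ve^2)^2\to 0$ (see \eqref{potentiel d = 0}); combined with the global Lipschitz bound \eqref{eq:inftygradient} and \eqref{eq:ball cap Omega}, the value $n-\rho_\ve^2(x_0)$ at any point controls $n-\rho_\ve^2$ from below on a ball of radius comparable to $\ve\,(n-\rho_\ve^2(x_0))$, so $(n-\rho_\ve^2(x_0))^4\le C\,\ve^{-2}\int_\Om(n-\rho_\ve^2)^2\to 0$ uniformly in $x_0\in\overline\Om$ (Lemma \ref{lem:unif conv in L-infty deg=0}). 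Once $\rho_\ve^2$ is uniformly close to $n$, some component satisfies $|u_{k,\ve}|\ge 1/2$, whence $f_\ve=\ve^{-2}(n-\rho_\ve^2)=|\Delta u_{k,\ve}|/|u_{k,\ve}|\le 2|\Delta u_{k,\ve}|$; this is what closes the Bochner identity and yields the uniform $H^2_{loc}$ bound (Lemma \ref{lem:loc H2 bound deg=0}), after which the $W^{2,p}$ and $C^k$ bootstraps proceed. Two further points you gloss over: the $C^k_{loc}$ induction is not automatic (the paper must pass through intermediate losses $\|\pa^{k+1}f_\ve\|_{L^\infty}\le C\ve^{-1}$ and $\|\pa^{k+2}u_{j,\ve}\|_{L^\infty}\le C\ve^{-1/2}$ before recovering uniform bounds, so your claim that ``no negative power of $\ve$ is produced'' needs justification), and the global $C^{1,\alpha}(\overline\Om)$ statement requires the boundary Pohozaev estimate $\int_{\pa\Om}\sum_j|\pa u_{j,\ve}/\pa\nu|^2\,d\sigma\le C$ of Lemma \ref{lem:pohozaev} together with an $H^2$-up-to-the-boundary argument via boundary flattening, which your one-line mention of a ``half-ball version'' does not supply.
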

%%%%%%%%%%%%%%%%%%%%%%%%%%%%%%%%%%%

%\textcolor{red}{
We will show that although  the degrees of the minimizers $(u_{1,\ve},\cdots,u_{n,\ve})$ are not $0$,   our energy $E_\ve (u_{1,\ve},\cdots,u_{n,\ve})$  is still bounded as $\ve \to 0$. 
This gives a different situation for the Ginzburg-Landau energy $E_\ve^b$ in  \eqref{eq:ftnal Eb}.
It is known that $E_\ve^b(u_{\ve,g}^b)$ grows logarithmically in the limit $\ve \to 0$ if $\deg (g,\pa \Om)\ne 0$.
See \cite{Sa98}  for optimal lower bounds for the energy   $E_\ve^b $ and many applications.
%}

The convergence \eqref{eq:conv Ck} and \eqref{eq:conv grad} imply that $  u_1^*,\cdots, u_n^* $ are smooth.
The condition $| u_1^*|^2+ \cdots+ | u_n^*|^2 =n$ makes each $ u_j^*$ smooth at its zeros.
It is an interesting question whether $|u_j^*|=1$ on $\Omega$ for some $j$.
This is not true if $d_j>0$.
As the following theorem says, we are not sure that this can happen even for the case $d_j=0$.

%%%%%%%%%%%%%
\begin{theorem}\label{thm:properties of min seq}
Let $(u_{1,\ve},\cdots,u_{n,\ve})$ be a minimizing sequence for \eqref{eq:ftnal E}.
\begin{itemize}
\item[{\rm (i)}]
If $d_j>0$ for some $1\le j\le n$, then we have
\[\frac{1}{\ve^2}\int_\Om\big(1-|u_{j,\ve}|^2\big)^2dx\to\infty.
\]

\item[{\rm (ii)}]
Suppose that $d_j=0$ for all $j=1,\cdots,n$.
If $\al(g_1,\cdots,g_n) > \beta(g_1,\cdots,g_n)$, then   there is some $k\in\{1,\cdots,n\}$ such that
\[\frac{1}{\ve^2}\int_\Om\big(1-|u_{k,\ve}|^2\big)^2dx\to\infty.
\]
Here, $\al(g_1,\cdots,g_n)$ is defined by Remark \ref{rmk:alpha beta}.

\item[{\rm (iii)}]
For $n=2$, if either one of $d_1$ and $d_2$ is positive or $\al(g_1,g_2)>\beta(g_1,g_2)$, then
\[\frac{1}{\ve^2}\int_\Om\big(1-|u_{j,\ve}|^2\big)^2dx\to\infty, \quad\mbox{for  each $j=1,2$.}
\]
\end{itemize}
 \end{theorem}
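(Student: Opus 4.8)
The plan is to extract from the energy one uniform bound and then settle the three assertions in turn, the coupling in part (iii) being the delicate point. First I would record a uniform energy bound using a constrained competitor. Since $\calX(g_1,\cdots,g_n)\ne\emptyset$ by Theorem \ref{thm:X(g1,g2)-2}, any $w=(w_1,\cdots,w_n)\in\calX(g_1,\cdots,g_n)$ makes the potential term in \eqref{eq:ftnal E} vanish, so by minimality
\[
E_\ve(u_{1,\ve},\cdots,u_{n,\ve})\le E_\ve(w)=\tfrac12 I(w),
\]
and taking the infimum over $w$ gives $E_\ve(u_{1,\ve},\cdots,u_{n,\ve})\le\tfrac12\beta(g_1,\cdots,g_n)$ for all $\ve$. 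In particular each $\into|\nabla u_{j,\ve}|^2\le\beta$ and the full defect $c_\ve:=\frac1{\ve^2}\into\big(n-\sum_j|u_{j,\ve}|^2\big)^2$ is bounded; it is essential that this competitor estimate controls the whole potential, not merely the gradients.

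For part (i) I would compare $u_{j,\ve}$ with the scalar Ginzburg--Landau minimizer. Since $u_{j,\ve}\in H^1_{g_j}$, minimality of $u_{j,\ve}^b$ gives
\[
\tfrac12\into|\nabla u_{j,\ve}|^2+\tfrac1{4\ve^2}\into(1-|u_{j,\ve}|^2)^2=E_\ve^b(u_{j,\ve})\ge E_\ve^b(u_{j,\ve}^b).
\]
When $d_j>0$ the right-hand side grows like $\pi d_j|\log\ve|$ by the logarithmic lower bound of \cite{BBH94,Sa98}, whereas $\into|\nabla u_{j,\ve}|^2$ is bounded by the first step; subtracting forces $\frac1{\ve^2}\into(1-|u_{j,\ve}|^2)^2\to\infty$.

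For part (ii) I would argue by contradiction. Suppose $\frac1{\ve^2}\into(1-|u_{j,\ve}|^2)^2$ stays bounded for every $j$. Then $\into(1-|u_{j,\ve}|^2)^2=O(\ve^2)\to0$, and the $C^{1,\alpha}(\bar\Om)$ convergence of Theorem \ref{thm:conv} lets me pass to the limit to obtain $\into(1-|u_j^*|^2)^2=0$, i.e. $|u_j^*|\equiv1$ for all $j$. Hence $(u_1^*,\cdots,u_n^*)$ is admissible for $\al(g_1,\cdots,g_n)$ (the infimum of $I$ over maps with each $|u_j|=1$, cf. Remark \ref{rmk:alpha beta}), so $I(u_1^*,\cdots,u_n^*)\ge\al(g_1,\cdots,g_n)$. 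But Theorem \ref{thm:conv} identifies $(u_1^*,\cdots,u_n^*)$ as a minimizer of $I$ over $\calX(g_1,\cdots,g_n)$, so $I(u_1^*,\cdots,u_n^*)=\beta(g_1,\cdots,g_n)$; this yields $\al\le\beta$, contradicting $\al>\beta$, so some $k$ must diverge.

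Finally, for part (iii) with $n=2$ I would show that divergence of one component propagates to the other through the constraint, which is the main obstacle. Writing $\rho_\ve=2-|u_{1,\ve}|^2-|u_{2,\ve}|^2$, the pointwise identity $1-|u_{1,\ve}|^2=-(1-|u_{2,\ve}|^2)+\rho_\ve$ gives, after squaring, integrating over $\Om$, and applying Cauchy--Schwarz to the cross term,
\[
\frac1{\ve^2}\into(1-|u_{1,\ve}|^2)^2\le\Big(\big(\tfrac1{\ve^2}\into(1-|u_{2,\ve}|^2)^2\big)^{1/2}+c_\ve^{1/2}\Big)^2,
\]
with $c_\ve$ the bounded defect from the first step. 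Thus if $\frac1{\ve^2}\into(1-|u_{2,\ve}|^2)^2$ were bounded then so would be $\frac1{\ve^2}\into(1-|u_{1,\ve}|^2)^2$, and symmetrically; divergence of either component forces divergence of both. It then remains only to produce one divergent component, which part (i) supplies when some $d_j>0$ and part (ii) supplies when $\al(g_1,g_2)>\beta(g_1,g_2)$. The crux is controlling the cross term uniformly in $\ve$, which is exactly why the opening estimate is arranged to bound the full potential defect $c_\ve$ rather than only the Dirichlet energies.
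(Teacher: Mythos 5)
Your proposal is correct, and parts (ii) and (iii) essentially coincide with the paper's argument: the contradiction via $|u_j^*|\equiv 1$ forcing $\al\le\beta$ in (ii), and the coupling $|X_\ve-Y_\ve|\le c_\ve^{1/2}\le C$ (you get it from the triangle inequality in $L^2$, the paper from expanding the square and Cauchy--Schwarz on the cross term — the same estimate) combined with (i)/(ii) in (iii). Where you genuinely diverge is part (i). The paper argues by contradiction: if $\frac1{\ve^2}\into(1-|u_{j,\ve}|^2)^2$ stayed bounded, then by Theorem \ref{thm:conv} the limit $u_j^*$ would lie in $H^1_{g_j}(\Om;S^1)$, which is empty when $d_j>0$ — a purely topological obstruction requiring no quantitative input. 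You instead compare $E_\ve^b(u_{j,\ve})\ge E_\ve^b(u_{j,\ve}^b)\ge\pi d_j|\log\ve|-C$ and subtract the uniformly bounded Dirichlet energy. Your route imports the logarithmic lower bound for the scalar Ginzburg--Landau minimizer (which the paper itself cites, so this is legitimate), and in exchange it yields a quantitative rate, $\frac1{\ve^2}\into(1-|u_{j,\ve}|^2)^2\ge 4\pi d_j|\log\ve|-C$, which the paper's soft argument does not; the paper's version, on the other hand, is self-contained given Theorem \ref{thm:conv} and needs only $H^1_{g_j}(\Om;S^1)=\emptyset$. Both are valid.
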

 %%%%%%%%%%%%%%%%%% 

This paper is organized as follows.
In Section 2 and   3, we   prove Theorem \ref{thm:conv}.
The basic idea is based on   \cite{BBH93}.
We deal with  interior and boundary estimates respectively  in Section 2 and 3.
We will focus on how the argument in  \cite{BBH93} for single equation \eqref{eq:calssical GL} can be generalized to the system \eqref{eq:semilocal GL} nontrivially.
In Section 4, we prove Theorem \ref{thm:properties of min seq}.
We   consider another  minimization problem for \eqref{I(u,v)} on a smaller space and see  how it is related to the question that $|u_j^*|=1$ for some $j$.
We also study some additional properties of solutions for \eqref{eq:semilocal GL} and \eqref{eq:u1v1 system}.

We list some notations and facts that are used hereafter.
The vector $\nu$ stands for the outward unit normal vector field on a given domain.
We write $B_r(x)$ or $B(x,r)$ to denote the ball of radius $r$ centered at a point $x$.
We often use the following fact: there exists $\ga_0 = \ga_0(\Om )>0$ such that
\begin{equation}\label{eq:ball cap Omega}
meas(\Om \cap B_r (x)) \geq \ga_0 r^2\quad\quad \forall x\in \Om,~ \forall r \leq 1.
\end{equation}

%%%%%%%%%%%%%%%%%%%%%%%%%%%%%%%%%%
%%%%%%%%%%%%%%%%%%%%%%%%%%%%%%%%%
%%%%%%%%%%%%%%%%%%%%%%%%%%%%%%%%%%%
\section{Interior Estimates}\label{sec:conv}
\setcounter{equation}{0}

In this section, we will prove Theorem \ref{thm:conv}.
In what follows, we often use the following lemma.
%%%%%%%%%%%%%%%%%%%%%%%%%%%%%
\begin{lemma}[{\cite[Lemma A.1]{BBH93}}]
\label{lem:elliptic estimates}
Let $-\Delta u =f$ in  an open set  $\Omega \subset \rtwo$.
Given any $K\Subset \Omega$, we have
\[ \|\nabla u\|_{L^\infty(K)}^2 \le C_K\|u\|_{L^\infty(\Omega)} \Big( \|u\|_{L^\infty(\Omega)}+\|f\|_{L^\infty(\Omega)}\Big),
\]
where $C $ depends only on $\Omega $ and $K$.
In addition, if $u=0$ on $\pa \Om$, then
\[ \|\nabla u\|_{L^\infty(K)}^2 \le C \|u\|_{L^\infty(\Omega)}  \|f\|_{L^\infty(\Omega)},
\]
where $C $ depends only on $\Omega $.
\end{lemma}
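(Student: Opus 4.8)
The plan is to reduce everything to a single pointwise gradient estimate on a ball and then optimize the radius; the zero-boundary-data case uses the same optimization combined with the maximum principle. First I would record the basic local bound: for every ball $B_r(x_0)\subset\Omega$,
\[ |\nabla u(x_0)| \le \frac{C}{r}\,\|u\|_{L^\infty(B_r(x_0))} + C\,r\,\|f\|_{L^\infty(B_r(x_0))} \]
with a universal constant $C$. This follows by writing $u = h + p$ on $B_r(x_0)$, where $h$ is harmonic with $h = u$ on $\partial B_r(x_0)$ and $p$ solves $-\Delta p = f$ with $p = 0$ on $\partial B_r(x_0)$. The interior derivative estimate for harmonic functions gives $|\nabla h(x_0)| \le (C/r)\sup_{B_r(x_0)}|h| \le (C/r)\|u\|_{L^\infty(B_r(x_0))}$ by the maximum principle, while the standard estimate for the Poisson equation with zero boundary data gives $|\nabla p(x_0)| \le C r\,\|f\|_{L^\infty(B_r(x_0))}$.

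For the first inequality I fix $K\Subset\Omega$ and put $d = \mathrm{dist}(K,\partial\Omega)$, $M = \|u\|_{L^\infty(\Omega)}$, $N = \|f\|_{L^\infty(\Omega)}$. For $x_0\in K$ the bound above is valid for all $r\le d$, so I minimize $r\mapsto CM/r + CNr$ over $(0,d]$. If the free minimizer $\sqrt{M/N}$ lies in $(0,d]$, choosing it yields $|\nabla u(x_0)|^2\le CMN\le CM(M+N)$. Otherwise $M>d^2N$, and the choice $r=d$ gives $Nd<M/d$, hence $|\nabla u(x_0)|^2\le CM^2/d^2\le C_K M(M+N)$. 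Taking the supremum over $x_0\in K$ proves the first estimate with $C_K = C\max(1,d^{-2})$.

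For the second inequality, assume $u=0$ on $\partial\Omega$ (with $\Omega$ the smooth bounded domain of the paper). The key is to run the same optimization at boundary points with the radius allowed to reach a fixed $r_0=r_0(\Omega)>0$. Since $\partial\Omega$ is smooth I would flatten it in finitely many charts and extend $u$ by odd reflection across the flattened boundary; because $u=0$ there, the extension solves a uniformly elliptic equation of the same type on a full ball of radius comparable to $r_0$, so the local bound of the first paragraph holds at every $x_0$ with $r$ up to $r_0$ and constants depending only on $\Omega$. If $\sqrt{M/N}\le r_0$ the optimization gives $|\nabla u(x_0)|^2\le CMN$ directly. In the remaining case $M>r_0^2N$ the choice $r=r_0$ gives only $|\nabla u(x_0)|^2\le CM^2/r_0^2$; here I use the maximum principle bound $M=\|u\|_{L^\infty}\le C_\Omega N$, obtained by comparison with the supersolution $w=\tfrac14\|f\|_{L^\infty}\,(R^2-|x-x_c|^2)$ for a ball $B_R(x_c)\supset\Omega$, to replace one factor $M$ by $C_\Omega N$ and conclude $|\nabla u(x_0)|^2\le (CC_\Omega/r_0^2)\,MN$. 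This yields the second estimate with $C=C(\Omega)$.

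The main obstacle is the boundary step: the whole improvement from $M(M+N)$ to $MN$ hinges on being able to take balls of a uniform radius $r_0(\Omega)$ at points arbitrarily close to $\partial\Omega$, which is exactly what odd reflection across the zero-data boundary provides, together with the maximum principle to dispose of the residual $M^2$ term. The one technical point to verify is that flattening turns $-\Delta$ into a variable-coefficient uniformly elliptic operator, so the local estimate of the first paragraph must be stated for such operators; this is routine (freeze the coefficients at $x_0$, treat the first-order and perturbation terms as part of the right-hand side, and absorb them), and the smoothness of $\partial\Omega$ keeps all constants controlled in terms of $\Omega$ alone.
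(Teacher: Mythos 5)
The paper gives no proof of this lemma; it is imported verbatim from \cite[Lemma A.1]{BBH93}, and your argument is precisely the standard proof from that reference: the pointwise bound $|\nabla u(x_0)|\le C r^{-1}\|u\|_{L^\infty}+Cr\|f\|_{L^\infty}$ on balls (harmonic part plus Newtonian potential), followed by optimization in $r$ up to $r=\mathrm{dist}(K,\partial\Omega)$, and, in the Dirichlet case, up to a fixed $r_0(\Omega)$ with the residual $M^2/r_0^2$ term absorbed via the maximum-principle bound $\|u\|_{L^\infty}\le C_\Omega\|f\|_{L^\infty}$. Your proof is correct; the one step you leave informal---the uniform local gradient estimate up to a smooth boundary portion where $u$ vanishes, obtained by flattening and odd reflection or by the boundary analogue of your ball decomposition on $\Omega\cap B_r$---is indeed routine, and you correctly identify it as exactly what upgrades $\|u\|_{L^\infty}(\|u\|_{L^\infty}+\|f\|_{L^\infty})$ to $\|u\|_{L^\infty}\|f\|_{L^\infty}$.
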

%%%%%%%%%%%%%%%%%%%%%%%%%%%%%%%%

The next lemma provides $L^\infty$-estimates  for   $u_{j,\ve}$ and their gradients for $j=1,\cdots,n$.

%%%%%%%%%%%%%%%%%%%%%%%%%%%%%%%
\begin{lemma}\label{lem:infty}
Let $(u_{1,\ve},\cdots,u_{n,\ve})$ be  any solution   of  the system \eqref{eq:semilocal GL}.
Then,  for $0 <\e <1$, we have
\begin{align}
\label{eq:infty}
&\sum_{j=1}^n |u_{j,\ve}|^2 < n   \quad \hbox{on} \quad \Om, \\
\label{eq:inftygradient}
&\|\nabla u_{j,\ve}\|_{L^\infty(\Om)} \leq \frac{C_0}{\ve} \qfor j=1,\cdots,n.
\end{align}
Here, $C_0$ is a constant that depends only on $g_j$ and $\Om$.
\end{lemma}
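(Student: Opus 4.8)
The plan is to prove \eqref{eq:infty} by a maximum principle applied to the total density $\rho := \sum_{j=1}^n |u_{j,\ve}|^2$, and \eqref{eq:inftygradient} by a rescaling argument at scale $\ve$ combined with interior and boundary elliptic estimates. For \eqref{eq:infty}, set $w = \rho - n$ and compute, using $\Delta |u_{j,\ve}|^2 = 2|\nabla u_{j,\ve}|^2 + 2\re(\overline{u_{j,\ve}}\,\Delta u_{j,\ve})$ together with \eqref{eq:semilocal GL}, that
\[
\Delta w - \frac{2\rho}{\ve^2}\, w = 2\sum_{j=1}^n |\nabla u_{j,\ve}|^2 \ge 0 \qin \Om .
\]
Since $|g_j| = 1$ on $\pa\Om$, we have $w = 0$ on $\pa\Om$, and the zeroth-order coefficient $2\rho/\ve^2$ is nonnegative. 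Thus $w$ is a subsolution of an operator of the form $\Delta - c$ with $c \ge 0$, and the weak maximum principle yields $w \le 0$, i.e. $\rho \le n$. The strong maximum principle then upgrades this: if $w$ attained the value $0$ at an interior point it would vanish identically, forcing $\sum_j |\nabla u_{j,\ve}|^2 \equiv 0$ and hence each $u_{j,\ve}$ (and each $g_j$) constant; away from this trivial configuration we get $\rho < n$ on $\Om$. In particular $|u_{j,\ve}| \le \sqrt n$ and $0 \le n - \rho \le n$.

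For \eqref{eq:inftygradient}, first note that the right-hand side $f_{j,\ve} := \ve^{-2} u_{j,\ve}(n - \rho)$ of \eqref{eq:semilocal GL} obeys $\|f_{j,\ve}\|_{L^\infty(\Om)} \le n^{3/2}\ve^{-2}$ by the bounds just obtained. I would then rescale: fixing $x_0 \in \bar\Om$ and putting $v(y) = u_{j,\ve}(x_0 + \ve y)$, the function $v$ solves $-\Delta_y v = \ve^2 f_{j,\ve}(x_0 + \ve y)$, whose right-hand side equals $u_{j,\ve}(n-\rho)$ and is therefore bounded by $n^{3/2}$ independently of $\ve$, while $\|v\|_{L^\infty} \le \sqrt n$. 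If $\dist(x_0,\pa\Om) \ge \ve$, then $B_1(0)$ is carried into $\Om$, and applying Lemma \ref{lem:elliptic estimates} to $v$ on $B_1$ gives $|\nabla_y v(0)| \le C$, that is $\ve\,|\nabla u_{j,\ve}(x_0)| \le C$ with $C$ depending only on $n$.

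If $\dist(x_0,\pa\Om) < \ve$, I would instead flatten $\pa\Om$ near the nearest boundary point by a smooth change of variables and rescale by $\ve$ as above; the rescaled boundary datum has gradient $O(\ve)$, so the up-to-the-boundary elliptic estimates on a half-ball, with constants depending only on $\Om$ and $\|g_j\|_{C^{1,\al}}$, again give $\ve\,|\nabla u_{j,\ve}(x_0)| \le C$. Taking the supremum over $x_0 \in \bar\Om$ of the interior and boundary bounds yields $\|\nabla u_{j,\ve}\|_{L^\infty(\Om)} \le C_0/\ve$ with $C_0 = C_0(g_j,\Om)$. The main obstacle is precisely this boundary case: one must check that the rescaling commutes with the flattening of the curved boundary and that the resulting constant is independent of $\ve$, which is where the smoothness of $\pa\Om$ and of $g_j$ is used. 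By contrast the interior estimate and the maximum principle for \eqref{eq:infty} are routine.
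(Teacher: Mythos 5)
Your proof of \eqref{eq:infty} is essentially the paper's: both apply the maximum principle to the total density, writing the equation for $n-\sum_j|u_{j,\ve}|^2$ (the paper) or $\sum_j|u_{j,\ve}|^2-n$ (you), observing that it is a super/subsolution of $\Delta-c$ with $c=2\rho/\ve^2\ge 0$ vanishing on $\pa\Om$, and then invoking the weak and strong maximum principles. (You are in fact slightly more careful than the paper in flagging the degenerate case where all $u_{j,\ve}$ are constant, in which the strict inequality fails.)

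For \eqref{eq:inftygradient} you take a genuinely different route. The paper decomposes $u_{j,\ve}=u_{j,\ve}^1+u_{j,\ve}^2$, where $u_{j,\ve}^1$ solves the Poisson equation with the nonlinear right-hand side (of size $O(\ve^{-2})$ in $L^\infty$) and \emph{zero} boundary data, and $u_{j,\ve}^2$ is the harmonic extension of $g_j$. The second assertion of Lemma \ref{lem:elliptic estimates}, namely $\|\nabla u\|_{L^\infty}^2\le C\|u\|_{L^\infty}\|f\|_{L^\infty}$ for $u=0$ on $\pa\Om$, immediately gives $\|\nabla u_{j,\ve}^1\|_\infty\le C/\ve$, while $\|\nabla u_{j,\ve}^2\|_\infty\le C$ by standard estimates; this is a three-line global argument that never touches the geometry of $\pa\Om$. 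You instead rescale by $\ve$ and run interior estimates on unit balls plus up-to-the-boundary estimates on flattened half-balls. This works and is self-contained, but it is longer and, as you yourself note, the boundary case requires checking that the flattening diffeomorphism and the $\ve$-dilation produce an operator with uniformly controlled coefficients and a boundary datum bounded in $C^{1,\al}$ uniformly in $\ve$; the paper's decomposition is designed precisely to sidestep this, since the zero-boundary-value estimate of Lemma \ref{lem:elliptic estimates} is already global. Both arguments rely in the same way on the $L^\infty$ bound from \eqref{eq:infty} to control the right-hand side of \eqref{eq:semilocal GL} by $n^{3/2}\ve^{-2}$, and both yield a constant depending only on $g_j$ and $\Om$.
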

\begin{proof}
Let $f =  n- \sum_{j=1}^n|u_{j,\ve}|^2$.
Then,
\[   \Delta f \le \frac{2}{\ve^2} \Big( \sum_{j=1}^n|u_{j,\ve}|^2 \Big) f - 2 \sum_{j=1}^n  |\nabla u_{j,\ve}|^2 \le \frac{2}{\ve^2}  \Big( \sum_{j=1}^n|u_{j,\ve}|^2 \Big)  f  \qin \Om.
\]
Since $f=0$ on $\pa\Om$, we have $f>0$ in $\Om$ by the strong maximum principle.

To show   \eqref{eq:inftygradient}, let us decompose $u_{j,\ve}=u_{j,\ve}^1+u_{j,\ve}^2$
where
\[ \left\{
\begin{aligned}
-\Delta u_{j,\ve}^1& = \frac{1}{\ve^2} u_{j,\ve} \big(n-\sum_{j=1}^n|u_{j,\ve}|^2\big) \qin \Omega, \quad u_{j,\ve}^1=0 \qon \pa \Om,\\
-\Delta u_{j,\ve}^2& = 0 \qin \Omega, \quad u_{j,\ve}^2=g_j \qon \pa\Omega.
\end{aligned}
\right.\]
Then, $\| \nabla u_{j,\ve}^1\|_\infty \le C/\ve$ by Lemma \ref{lem:elliptic estimates} and $ \| \nabla u_{j,\ve}^2\|_\infty  \le C$ by elliptic estimates for $j=1,\cdots,n$.
\end{proof}
%%%%%%%%%%%%%%%%%%%%%%%%%%%%%%%%%%%%%%% 

 %(\textcolor{red}{actually it is valid for any solution of problem \eqref{eq:semilocal GL}}).

\begin{proposition}
\label{prop:conv in H^1}
Let $(u_{1,\ve},\cdots,u_{n,\ve}) $ be a minimizer for the problem \eqref{eq:min prob}.
Then,
\[ (u_{1,\ve},\cdots,u_{n,\ve}) \to  ( u_1^*,\cdots, u_n^*)  \qin   H^1_{g_1} \times  \cdots\times H^1_{g_n} \qas  \ve \to 0,
\]
where $( u_1^*,\cdots, u_n^*)$ is a minimizer for   \eqref{inf couple}.
Moreover,
\begin{equation}\label{potentiel d = 0}
\lim_{\ve \rightarrow 0}\frac{1}{ \ve^2} \int_\Om \Big(n-\sum_{j=1}^n|u_{j,\ve}|^2\Big)^2 dx = 0.
\end{equation}
\end{proposition}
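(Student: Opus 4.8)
The plan is to run the standard direct-method argument, using the minimizer $(u_1^*,\cdots,u_n^*)$ of $I$ over $\calX(g_1,\cdots,g_n)$ furnished by Theorem \ref{thm:X(g1,g2)-2} as a competitor to control $E_\ve$ from above. The starting point is the a priori bound: since $(u_1^*,\cdots,u_n^*)\in\calX(g_1,\cdots,g_n)$ the potential term of \eqref{eq:ftnal E} vanishes on it, so the minimality of $(u_{1,\ve},\cdots,u_{n,\ve})$ gives
\[
E_\ve(u_{1,\ve},\cdots,u_{n,\ve}) \le E_\ve(u_1^*,\cdots,u_n^*) = \tfrac12\,\beta(g_1,\cdots,g_n).
\]
Because both summands of $E_\ve$ are nonnegative, this single inequality yields simultaneously a uniform Dirichlet bound $\sum_{j=1}^n\|\nabla u_{j,\ve}\|_{L^2(\Om)}^2 \le \beta(g_1,\cdots,g_n)$ and the estimate $\into (n-\sum_{j}|u_{j,\ve}|^2)^2\,dx \le 2\beta(g_1,\cdots,g_n)\,\ve^2$. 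Combined with $\sum_j|u_{j,\ve}|^2 < n$ from \eqref{eq:infty}, the family $(u_{j,\ve})_\ve$ is bounded in $H^1_{g_j}$ for each $j$.

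First I would extract a subsequence along which $u_{j,\ve}\rightharpoonup u_j^*$ weakly in $H^1_{g_j}$ and, via the compact embedding $H^1(\Om)\hookrightarrow L^4(\Om)$ in dimension two, strongly in $L^4$ and a.e. The weak limit retains the boundary trace $g_j$, so $u_j^*\in H^1_{g_j}$. Moreover $\into(n-\sum_j|u_{j,\ve}|^2)^2\,dx\to0$ forces $\sum_j|u_{j,\ve}|^2\to n$ in $L^2(\Om)$, while the $L^4$-convergence gives $|u_{j,\ve}|^2\to|u_j^*|^2$ in $L^2(\Om)$; hence $\sum_{j=1}^n|u_j^*|^2=n$ a.e., i.e. $(u_1^*,\cdots,u_n^*)\in\calX(g_1,\cdots,g_n)$. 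Weak lower semicontinuity of the Dirichlet energy then yields $I(u_1^*,\cdots,u_n^*)\le\liminf_\ve\sum_j\|\nabla u_{j,\ve}\|_{L^2}^2\le\beta(g_1,\cdots,g_n)$, and the reverse inequality is immediate from the definition \eqref{inf couple} since the limit is admissible; therefore $(u_1^*,\cdots,u_n^*)$ minimizes $I$ on $\calX(g_1,\cdots,g_n)$.

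Finally I would upgrade to strong convergence. The lower bound just obtained together with the a priori upper bound pins down $E_\ve(u_{1,\ve},\cdots,u_{n,\ve})\to\tfrac12\beta(g_1,\cdots,g_n)$; isolating the two nonnegative pieces of $E_\ve$ then forces $\sum_j\|\nabla u_{j,\ve}\|_{L^2}^2\to\beta(g_1,\cdots,g_n)=\sum_j\|\nabla u_j^*\|_{L^2}^2$ and $\f1{\ve^2}\into(n-\sum_j|u_{j,\ve}|^2)^2\,dx\to0$, which is exactly \eqref{potentiel d = 0}. Strong $H^1$ convergence of the gradients then follows from the Hilbert-space identity: expanding
\[
\sum_{j=1}^n\|\nabla u_{j,\ve}-\nabla u_j^*\|_{L^2}^2 = \sum_{j=1}^n\|\nabla u_{j,\ve}\|_{L^2}^2 - 2\sum_{j=1}^n\re\into \nabla u_{j,\ve}\cdot\overline{\nabla u_j^*}\,dx + \sum_{j=1}^n\|\nabla u_j^*\|_{L^2}^2,
\]
the norm convergence of the first term and the weak convergence in the cross term show the whole expression tends to $0$; together with the $L^2$-convergence of the functions this gives convergence in $H^1_{g_1}\times\cdots\times H^1_{g_n}$.

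I expect the main obstacle to be the matching lower bound, namely guaranteeing that the energy does not drop in the limit. This hinges on the limit being admissible, i.e. on passing the coupled constraint $\sum_j|u_j^*|^2=n$ across the weak limit, which requires combining the $L^4$-compactness with the potential estimate rather than using weak convergence alone. The whole scheme is viable precisely because $\calX(g_1,\cdots,g_n)\ne\emptyset$ keeps $E_\ve(u_{1,\ve},\cdots,u_{n,\ve})$ of order $O(1)$; unlike the single-component problem \eqref{eq:calssical GL}, there is no $O(\log\frac1\ve)$ divergence, so $\beta(g_1,\cdots,g_n)$ is the correct comparison value and the limit is a genuine minimizer of $I$ over $\calX(g_1,\cdots,g_n)$.
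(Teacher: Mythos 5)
Your proposal is correct and follows essentially the same route as the paper: compare $E_\ve$ against a minimizer of $I$ on $\calX(g_1,\cdots,g_n)$ to get the uniform bound, pass the constraint $\sum_j|u_j^*|^2=n$ to the weak limit using the potential estimate and strong $L^p$ compactness, and upgrade to strong $H^1$ convergence. The only cosmetic difference is that you obtain the gradient-norm convergence via weak lower semicontinuity plus the matching upper bound, whereas the paper tests $E_\ve$ directly against the weak limit $(u_1^*,\cdots,u_n^*)$ and expands $\sum_j\|\nabla u_{j,\ve}-\nabla u_j^*\|_{L^2}^2$; both are the standard direct-method argument.
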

\begin{proof}
Let $ (\tilu_1,\cdots,\tilu_n) \in  H^1_{g_1} \times  \cdots\times H^1_{g_n}$ be any minimizer for \eqref{inf couple}.
Since   $\sum_{j=1}^n|\tilu_j|^2=n$ a.e. on $\Om$ and   $E_\ve(u_{1,\ve},\cdots,u_{n,\ve}) \le E_\ve(\tilu_1,\cdots,\tilu_n)$, we have
\begin{equation}
\label{eq:energy upper bd d1d2=0}
\begin{aligned}
&  \frac12 \int_\Om \sum_{j=1}^n |\nabla u_{j,\ve}|^2 dx + \frac{1}{4\ve^2} \int_\Om \Big(n- \sum_{j=1}^n|u_{j,\ve}|^2\Big)^2 dx  \\
\le~& \frac12 \int_\Om  \sum_{j=1}^n|\nabla \tilu_j|^2  dx.
\end{aligned}
\end{equation}
Hence,
\begin{equation}
\label{eq:unif bound in H1 deg=0}
\text{$\{ (u_{1,\ve},\cdots,u_{n,\ve}) \}$ is uniformly bounded in $ H^1 \times\cdots\times H^1$}
\end{equation}
 and  there exists an $n$ tuple  $( u_1^*,\cdots, u_n^*) \in  H^1_{g_1} \times  \cdots\times H^1_{g_n}$ such that up to a subsequence,
\[ (u_{1,\ve},\cdots,u_{n,\ve}) \to ( u_1^*,\cdots, u_n^*)
\]
weakly in  $H^1 \times\cdots\times H^1$  and  strongly in  $L^p\times\cdots\times L^p$  for all $p\ge1$.
Moreover,  $\sum_{j=1}^n| u_j^*|^2 =n$ a.e. on $\Omega$ by \eqref{eq:energy upper bd d1d2=0} and thus $( u_1^*,\cdots, u_n^*) \in  \calX(g_1,\cdots,g_n)$.
Since $E_\ve(u_{1,\ve},\cdots,u_{n,\ve}) \le E_\ve( u_1^*,\cdots, u_n^*)$, we obtain
\[ \int_\Om \sum_{j=1}^n |\nabla u_{j,\ve}|^2  dx  \le  \int_\Om \sum_{j=1}^n |\nabla  u_j^*|^2  dx.
\]
This implies that
\begin{align*}
 &\int_\Om  \sum_{j=1}^n | \nabla u_{j,\ve} -\nabla  u_j^*|^2 dx \\
  \le~& 2  \int_\Om  \sum_{j=1}^n |\nabla  u_j^*|^2 dx -2 \int_\Om \sum_{j=1}^n  \nabla u_{j,\ve} \cdot \nabla  u_j^*dx ~\to ~ 0.
\end{align*}
So, $(u_{1,\ve},\cdots,u_{n,\ve}) \to  ( u_1^*,\cdots, u_n^*)$ strongly in $ H^1 \times\cdots\times H^1$.
Taking $\liminf_{\ve\to 0}$ on \eqref{eq:energy upper bd d1d2=0}, we are led to
\[      \int_\Om \sum_{j=1}^n |\nabla  u_j^*|^2 dx
\le   \int_\Om \sum_{j=1}^n|\nabla \tilu_j|^2  dx,
\]
and  we conclude that $( u_1^*,\cdots, u_n^*)$ is also a solution for the minimization problem \eqref{inf couple}.
Moreover, \eqref{potentiel d = 0} is a direct consequence of \eqref{eq:energy upper bd d1d2=0}.
\end{proof}
%%%%%%%%%%%%%%%%%%%%%%%%%%%%

%%%%%%%%%%%%%%%%%%%%%%%%%%%%%%%%
\begin{lemma}\label{lem:unif conv in L-infty deg=0}
$\sum_{j=1}^n |u_{j,\ve}|^2  \to n$ uniformly on $\overline\Om$.
\end{lemma}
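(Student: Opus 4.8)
The plan is to combine the pointwise gradient bound from Lemma \ref{lem:infty} with the vanishing of the rescaled potential established in \eqref{potentiel d = 0}, through a propagation-of-smallness argument in the spirit of \cite{BBH93}. Set $f_\ve = n - \sum_{j=1}^n |u_{j,\ve}|^2$, which is continuous on $\overline\Om$, satisfies $0 < f_\ve \le n$ on $\Om$ by \eqref{eq:infty}, and vanishes on $\pa\Om$ since $|g_j| = 1$. First I would derive a Lipschitz bound for $f_\ve$: differentiating gives $|\nabla f_\ve| \le 2 \sum_{j=1}^n |u_{j,\ve}|\,|\nabla u_{j,\ve}|$, and since $|u_{j,\ve}| \le \sqrt{n}$ on $\Om$ and $\|\nabla u_{j,\ve}\|_{L^\infty(\Om)} \le C_0/\ve$ by \eqref{eq:inftygradient}, this yields $\|\nabla f_\ve\|_{L^\infty(\Om)} \le C_1/\ve$ for a constant $C_1$ depending only on $n$, the $g_j$ and $\Om$. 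Because $\Om$ is a bounded smooth (hence quasiconvex) domain, this interior bound upgrades to a global Lipschitz estimate for $f_\ve$ on $\overline\Om$ with constant comparable to $C_1/\ve$.

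Next I would argue by contradiction. Suppose the conclusion fails; then, along a subsequence, there exist $\mu > 0$ and points $x_\ve \in \overline\Om$ at which the maximum of $f_\ve$ is attained with $f_\ve(x_\ve) \ge \mu$. The Lipschitz bound propagates this lower bound to a small ball: setting $r_\ve = \mu \ve /(2 C_1)$, for every $x \in \Om \cap B_{r_\ve}(x_\ve)$ we have $f_\ve(x) \ge \mu - (C_1/\ve) r_\ve = \mu/2$. For $\ve$ small enough $r_\ve \le 1$, so the uniform lower density estimate \eqref{eq:ball cap Omega} gives $\mathrm{meas}(\Om \cap B_{r_\ve}(x_\ve)) \ge \ga_0 r_\ve^2$. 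Consequently,
\[
\frac{1}{\ve^2} \int_\Om f_\ve^2 \, dx \ge \frac{1}{\ve^2} \int_{\Om \cap B_{r_\ve}(x_\ve)} f_\ve^2 \, dx \ge \frac{1}{\ve^2} \cdot \frac{\mu^2}{4} \cdot \ga_0 r_\ve^2 = \frac{\ga_0 \mu^4}{16 C_1^2},
\]
a fixed positive constant independent of $\ve$. This contradicts \eqref{potentiel d = 0}, which forces the left-hand side to tend to $0$. Hence $\|f_\ve\|_{L^\infty(\overline\Om)} \to 0$, that is, $\sum_{j=1}^n |u_{j,\ve}|^2 \to n$ uniformly on $\overline\Om$.

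I expect the main obstacle to be the treatment of boundary points: since $x_\ve$ may lie on or arbitrarily close to $\pa\Om$, one cannot assume $B_{r_\ve}(x_\ve) \subset \Om$, and one must also ensure that the interior gradient bound genuinely controls the oscillation of $f_\ve$ up to the boundary. This is precisely why the argument integrates over $\Om \cap B_{r_\ve}(x_\ve)$ and invokes the density bound \eqref{eq:ball cap Omega} rather than the full volume of the ball, and why the smoothness of $\Om$ is used to pass from the interior Lipschitz estimate to a global one; once these points are handled, the remaining steps are routine quantitative estimates.
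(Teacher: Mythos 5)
Your proof is correct and follows essentially the same route as the paper: both use the $O(1/\ve)$ gradient bound of Lemma \ref{lem:infty} to propagate a lower bound on $n-\sum_j|u_{j,\ve}|^2$ from a point to a ball of radius of order $\ve$, then integrate over $\Om\cap B$ using \eqref{eq:ball cap Omega} and contradict \eqref{potentiel d = 0}. The only difference is presentational (you argue by contradiction at a maximum point, the paper estimates $a_\ve(x_0)$ directly and uniformly in $x_0$), which changes nothing of substance.
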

\begin{proof}
Fix $x_0 \in \Om$.
By \eqref{eq:inftygradient}, we have that for $|x-x_0|<\ve \rho_\ve$,
\[ |u_{j,\ve}(x)|^2  \le  \big( |u_{j,\ve}(x_0)|+ C_0 \rho_\ve  \big)^2  \qfor j=1,\cdots,n.
\]
Since $|u_{j,\ve}|\le \sqrt{n}$ for all $j=1,\cdots,n$, we obtain
\begin{align*}
  n- \sum_{j=1}^n|u_{j,\ve}(x)|^2  &\ge n-    \sum_{j=1}^n|u_{j,\ve}(x_0)|^2- 2n\sqrt{n} C_0 \rho_\ve  -  nC_0^2\rho_\ve^2 \\
 & \ge \frac{1}{2}\Big(n-    \sum_{j=1}^n|u_{j,\ve}(x_0)|^2\Big),
\end{align*}
where the last inequality holds if and only if
\[ a_\ve (x_0) :=\frac{1}{2}\Big(n-    \sum_{j=1}^n|u_{j,\ve}(x_0)|^2\Big) \ge 2n\sqrt{n} C_0 \rho_\ve  + nC_0^2\rho_\ve^2,
\]
in other words,
\[ 0<\rho_\ve \le \frac{-n\sqrt{n}+ \sqrt{n^3+na_\ve(x_0) } }{nC_0}  = :b_\ve (x_0) .
\]
By taking $\rho_\ve=b_\ve $, we deduce from \eqref{eq:ball cap Omega} and   \eqref{potentiel d = 0} that
\begin{align*}
o(1)&= \frac{1}{ \ve^2} \int_{B_{\ve \rho_\ve}(x_0) \cap \Om}\Big(n-    \sum_{j=1}^n|u_{j,\ve}|^2\Big)^2 dx \ge  \ga_0 a_\ve^2 (x_0) b_\ve^2(x_0).
\end{align*}
Hence, $a_\ve (x_0) \to 0$ uniformly.
\end{proof}
%%%%%%%%%%%%%%%%%%%%%%%%%%%%%%%%%%%

%%%%%%%%%%%%%%%%%%%%%%%%%%%%%%%%
\begin{lemma}\label{lem:loc H2 bound deg=0}
$\|(u_{1,\ve},\cdots,u_{n,\ve})\|_{H^2_{loc}  \times\cdots\times H^2_{loc}  } \le C$.
\end{lemma}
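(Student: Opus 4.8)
The plan is to deduce the $H^2_{loc}$ bound from the standard interior $L^2$ elliptic estimate and to reduce everything to a single scalar quantity. Set $\Sigma_\ve := \sum_{k=1}^n |u_{k,\ve}|^2$ and
\[ V_\ve := \frac{1}{\ve^2}\big( n - \Sigma_\ve \big) \ge 0, \]
the sign coming from \eqref{eq:infty}. Since each component solves $-\Delta u_{j,\ve} = u_{j,\ve} V_\ve$, the interior $H^2$ estimate on $K' \Subset K \Subset \Om$ gives $\|u_{j,\ve}\|_{H^2(K')} \le C\big( \|u_{j,\ve}\|_{L^2(K)} + \|u_{j,\ve} V_\ve\|_{L^2(K)} \big)$. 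By \eqref{eq:infty} we have $\|u_{j,\ve}\|_{L^\infty(\Om)} \le \sqrt n$, so the first term is harmless and $\|u_{j,\ve} V_\ve\|_{L^2(K)} \le \sqrt n\, \|V_\ve\|_{L^2(K)}$. Thus the whole lemma reduces to the uniform local bound $\|V_\ve\|_{L^2(K)} \le C$.

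First I would prove the weaker \emph{local $L^1$} bound $\int_K V_\ve\,dx \le C$. Multiplying the $j$-th equation of \eqref{eq:semilocal GL} by $\zeta^2\overline{u_{j,\ve}}$ with a cut-off $\zeta \in C_c^\infty(\Om)$, $\zeta \equiv 1$ on $K$, integrating by parts, taking real parts and summing over $j$ yields
\[ \sum_{j=1}^n\int_\Om \zeta^2|\nabla u_{j,\ve}|^2\,dx + \sum_{j=1}^n\int_\Om 2\zeta\,\re\big(\overline{u_{j,\ve}}\,\nabla\zeta\cdot\nabla u_{j,\ve}\big)\,dx = \int_\Om \zeta^2\,\Sigma_\ve\, V_\ve\,dx. \]
The left-hand side is controlled by the global Dirichlet energy, which is uniformly bounded by Proposition \ref{prop:conv in H^1} (using also \eqref{eq:infty} for the lower-order term). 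Since $\Sigma_\ve \ge n/2$ on $\Om$ for small $\ve$ by Lemma \ref{lem:unif conv in L-infty deg=0}, the right-hand side dominates $\tfrac{n}{2}\int_\Om \zeta^2 V_\ve\,dx$, and hence $\int_K V_\ve\,dx \le C$. Because $\int_K V_\ve^2 \le \|V_\ve\|_{L^\infty(K)} \int_K V_\ve$, it now suffices to establish the interior bound $\|V_\ve\|_{L^\infty(K)} \le C$.

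For the $L^\infty$ bound I would couple a maximum-principle step with the gradient estimate. The function $V_\ve$ solves
\[ -\ve^2\Delta V_\ve + 2\Sigma_\ve V_\ve = 2\sum_{j=1}^n |\nabla u_{j,\ve}|^2 \qin \Om, \]
an equation whose zeroth-order coefficient is $\ge n$, so the operator $-\ve^2\Delta + n$ has boundary layers of width $\ve$. On a ball $B_r(x_0) \Subset \Om$ where the right-hand side is bounded, a barrier of the form $\tfrac{2}{n}\sup_{B_r}\sum_j|\nabla u_{j,\ve}|^2 + \tfrac{n}{\ve^2}\,\eta$, with $\eta$ an exponentially decaying supersolution of $-\ve^2\Delta+n$ equal to $1$ on $\partial B_r$, gives by comparison
\[ V_\ve(x_0) \le \frac{2}{n}\sup_{B_r}\sum_{j=1}^n|\nabla u_{j,\ve}|^2 + \frac{n}{\ve^2}\,e^{-c r/\ve}; \]
here the trivial bound $V_\ve \le n/\ve^2$ (from $\Sigma_\ve \ge 0$) is used on $\partial B_r$ and is annihilated by the exponential. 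Conversely, Lemma \ref{lem:elliptic estimates} applied to $-\Delta u_{j,\ve} = u_{j,\ve} V_\ve$ together with \eqref{eq:infty} yields $\sum_j\|\nabla u_{j,\ve}\|_{L^\infty(K'')}^2 \le C\big(1 + \|V_\ve\|_{L^\infty(K)}\big)$ for $K'' \Subset K$. Combining the two, the $L^\infty$ bound on $V_\ve$ follows \emph{once} the interior gradient is bounded by a constant, rather than by $C/\ve$ as in Lemma \ref{lem:infty}.

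The hard part will be that these two estimates are circular: the barrier needs the gradient bounded on a ball, while Lemma \ref{lem:elliptic estimates} bounds the gradient only in terms of $V_\ve$ on a \emph{larger} set, and the resulting inequality has a constant not known to be contractive. To break the loop I would argue by rescaling: if $M_\ve := \max_{\overline K} V_\ve \to \infty$ along a subsequence, blow up at a maximum point $x_\ve$ at scale $\lambda_\ve = M_\ve^{-1/2}$, setting $\tilde u_j(y) = u_{j,\ve}(x_\ve + \lambda_\ve y)$. Then $-\Delta \tilde u_j = \tilde u_j W$ with $W(y) = \lambda_\ve^2 V_\ve(x_\ve+\lambda_\ve y)$ satisfying $0 \le W \le 1 = W(0)$, while $n - \sum_j|\tilde u_j|^2 = (\ve^2 M_\ve) W \to 0$ by Lemma \ref{lem:unif conv in L-infty deg=0}. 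Elliptic estimates give $C^1_{loc}$ convergence to an entire nonconstant solution of the harmonic map system \eqref{eq:u1v1 system} on $\rtwo$ with $\sum_j|\tilde u_{j,\infty}|^2 = n$; by conformal invariance of the Dirichlet energy in two dimensions, the uniform bound of Proposition \ref{prop:conv in H^1} forces this limit to have \emph{finite} energy. This is precisely the structural feature absent in the scalar model, where the energy diverges logarithmically. The delicate remaining point, which I expect to be the main obstacle, is to rule out such a nontrivial finite-energy bubble: one must pass the minimality of $(u_{1,\ve},\cdots,u_{n,\ve})$ to the limit and invoke the triviality of the obstruction ($\pi_1(S^{2n-1})=0$ for $n\ge 2$) to conclude that the blow-up limit is constant, contradicting $\sum_j|\nabla \tilde u_{j,\infty}(0)|^2 = n$. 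This contradiction yields $\|V_\ve\|_{L^\infty(K)} \le C$, and with the local $L^1$ bound the desired $H^2_{loc}$ estimate follows.
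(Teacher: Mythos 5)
Your reduction to a bound on $V_\ve=\ve^{-2}(n-\Sigma_\ve)$ is a genuinely different route from the paper's, and it contains a real gap at exactly the step you flag. You propose to rule out the blow-up bubble by ``invoking the triviality of the obstruction ($\pi_1(S^{2n-1})=0$)''. Topological triviality does not force a harmonic map to be constant: after removing the point singularity, your blow-up limit would be a finite-energy harmonic map $\rtwo\to S^{2n-1}$ (equivalently a harmonic sphere $S^2\to S^{2n-1}$), and nonconstant examples exist for every $n\ge 2$ --- e.g.\ any nonconstant rational map $S^2\to S^2$ composed with a totally geodesic embedding $S^2\hookrightarrow S^{2n-1}$. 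So the contradiction cannot come from homotopy groups. The correct way to kill the bubble is quantitative: the strong $H^1$ convergence of Proposition \ref{prop:conv in H^1} forbids energy concentration, so $\int_{B_{\lambda_\ve R}(x_\ve)}\sum_j|\nabla u_{j,\ve}|^2\to 0$ for every fixed $R$, hence the limit has zero Dirichlet energy on every ball and is constant; the contradiction then comes from showing the limit is nonconstant, which itself needs an argument you gloss over (you assert $\sum_j|\nabla\tilde u_{j,\infty}(0)|^2=n$, but $W(0)=1$ only transfers to the gradient at $0$ if you exploit $\Delta V_\ve\le 0$ at an interior maximum point, and the maximum of $V_\ve$ over $\overline K$ need not be interior). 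As written, the argument does not close. Note also that your target estimate $\|V_\ve\|_{L^\infty_{loc}}\le C$ is strictly stronger than the lemma: in the paper it is the content of the \emph{next} lemma (Lemma \ref{lem:f,h C1 bound deg=0}) and is deduced \emph{from} the $H^2_{loc}$ bound, so your route inverts the logical order and correspondingly demands much heavier machinery.

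For contrast, the paper's proof is elementary and avoids $V_\ve$ almost entirely. It starts from the Bochner identity $\Delta\big(\sum_j|\nabla u_{j,\ve}|^2\big)=2\sum_j|D^2u_{j,\ve}|^2+2\sum_j\nabla u_{j,\ve}\cdot\nabla(\Delta u_{j,\ve})$, and then uses the one observation that does all the work: by Lemma \ref{lem:unif conv in L-infty deg=0} some component satisfies $|u_{k,\ve}|\ge 1/2$, so the equation gives the pointwise bound $\ve^{-2}\big(n-\sum_i|u_{i,\ve}|^2\big)\le 2|\Delta u_{k,\ve}|$. This yields $\sum_j|D^2u_{j,\ve}|^2\le\frac12\sum_j\Delta(|\nabla u_{j,\ve}|^2)+C\sum_j|\nabla u_{j,\ve}|^4$, and the quartic term is absorbed after multiplying by a cutoff $\zeta^2$ via the embedding $W^{1,1}\hookrightarrow L^2$ applied to $\zeta|\nabla u_{j,\ve}|^2$, using only the uniform $H^1$ bound. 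If you want to salvage your approach, replace the homotopy argument by the no-concentration argument above; but the Bochner route is both shorter and what the subsequent lemmas of the paper are built on.
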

\begin{proof}
We note that
\[   \Delta \Big(\sum_{j=1}^n|\nabla u_{j,\ve}|^2\Big)  =~ 2\sum_{j=1}^n|D^2 u_{j,\ve}|^2  + 2\sum_{j=1}^n\nabla u_{j,\ve} \cdot \nabla (\Delta u_{j,\ve}).
\]
Hence, we have
\begin{align*}
 \sum_{j=1}^n|D^2 u_{j,\ve}|^2  =~ &  \frac12\sum_{j=1}^n \Delta \Big(|\nabla u_{j,\ve}|^2\Big) + \frac1{\ve^2} \sum_{j=1}^n|\nabla u_{j,\ve}|^2 \Big(n-\sum_{i=1}^n|u_{i,\ve}|^2\Big)\\
& \qquad - \frac{2}{\ve^2} \Big(\sum_{j=1}^nu_{j,\ve}\cdot \nabla u_{j,\ve}\Big)^2.
\end{align*}
Since $\sum_{j=1}^n|u_{j,\ve}|^2\to n$ uniformly on $\overline\Om$,  at least one of $u_{j,\ve}$ satisfies that $|u_{j,\ve}| \ge 1/2$ on $\Om$ for all small $\ve$.
If  $|  u_{k,\ve}| \ge 1/2$ for some $k\in\{1,\cdots,n\}$, then
\[ \frac1{\ve^2}   \Big(n-\sum_{i=1}^n|u_{i,\ve}|^2\Big) = \frac{|\Delta u_{k,\ve}|}{|u_{k,\ve}|} \le 2 |\Delta u_{k,\ve}|.
\]
Hence, we can write
\[ \sum_{j=1}^n|D^2 u_{j,\ve}|^2  \le~    \frac12\sum_{j=1}^n \Delta \Big(|\nabla u_{j,\ve}|^2\Big) + 2 \sum_{j=1}^n|\nabla u_{j,\ve}|^2   |\Delta u_{k,\ve}|,
\]
which implies that
\begin{equation}
\label{eq:D2u ineq}
\frac12\sum_{j=1}^n|D^2 u_{j,\ve}|^2  \le  \frac12 \sum_{j=1}^n \Delta \Big(|\nabla u_{j,\ve}|^2\Big)+ C\sum_{j=1}^n|\nabla u_{j,\ve}|^4.
\end{equation}

Fix $x_0$ and $r = \dist(x_0,\pa \Om)/4$.
Let $\zeta$ be a smooth function such that $\zeta=1$ on $B_r(x_0)$, $\supp~ \zeta \subset B_{2r}(x_0)$ and $0\le \zeta \le 1$.
Multiplying \eqref{eq:D2u ineq} by $\zeta^2$ and using the Sobolev embedding $W^{1,1}(\Om)\hookrightarrow L^2(\Om)$, we are led to
\begin{align*}
& \frac12 \int_{B_{2r}(x_0) } \zeta^2\sum_{j=1}^n|D^2 u_{j,\ve}|^2 dx\\
\le ~& C  \int_{B_{2r}(x_0) } \sum_{j=1}^n|\nabla u_{j,\ve}|^2dx + C \int_{B_{2r}(x_0) }  \sum_{j=1}^n \big(\zeta|\nabla u_{j,\ve}|^2\big)^2dx\\
\le ~& C  \int_{B_{2r}(x_0) }  \sum_{j=1}^n|\nabla u_{j,\ve}|^2dx + C \int_{B_{2r}(x_0) } \zeta\sum_{j=1}^n |\nabla u_{j,\ve}| \, |D^2 u_{j,\ve}|dx.
\end{align*}
In the sequel, by \eqref{eq:unif bound in H1 deg=0}
\[ \frac12 \int_{B_{2r}(x_0) } \zeta^2\sum_{j=1}^n|D^2 u_{j,\ve}|^2dx \le C  \int_{\Om } \sum_{j=1}^n|\nabla u_{j,\ve}|^2dx  \le C.
\]
Now, the standard covering argument shows that each $u_{j,\ve}$ is  uniformly bounded in $H^2_{loc}(\Om)$.
\end{proof}
%%%%%%%%%%%%%%%%%%%%%%%%%%%%%%%%%%%

Let
\begin{equation} \label{eq:f-epsilon}
f_\ve= \frac{1}{\ve^2}\Big(n-\sum_{j=1}^n|u_{j,\ve}|^2\Big) .
\end{equation}
Then, we can rewrite \eqref{eq:semilocal GL} as
\begin{equation} \label{eq:u,v eqn via f-epsilon}
\left\{ \begin{aligned}
-\Delta u_{j,\ve} &= f_\ve u_{j,\ve}  \qin \Om,\\
u_{j,\ve} &=g_j \qon \pa \Om.
\end{aligned} \right.
\end{equation}
A simple calculation gives
\begin{equation} \label{eq:f-epsilon eqn}
\left\{ \begin{aligned}
  -\ve^2 \Delta f_\ve + 2 \sum_{j=1}^n|u_{j,\ve}|^2 f_\ve &= 2\sum_{j=1}^n|\nabla u_{j,\ve}|^2 \qin \Om,\\
f_\ve& =0 \qon \pa \Om .
\end{aligned} \right.
\end{equation}
Given a compact set  $K \subset \Om$, we define
\[ \calA_K =\Big\{ \tilde{K} \subset \Om~|~ K\Subset \tilde{K} \Subset \Om \Big\}.
\]
For   $\tilde{K} \in  \calA_K $, we define a set  $\calF(K,\tilde{K})$ of smooth functions  by
$$
\calF(K,\tilde{K})=\Big\{ \vp\in C^\infty(\Om) ~|~ 0\le\vp\le 1, \vp\equiv 1  ~\mbox{on $K$ and  $\supp~\vp \subset \tilde{K}$}\Big\}.
$$
In what follows, $C_K$, or simply $C$,  denotes a generic constant depending on a compact set $K$ but independent of $\ve$.
For simplicity, we also often write $\| (u_1,\cdots,u_n)\|_{X \times\cdots\times X}$ as $\| (u_1,\cdots,u_n)\|_{X }$ for a function space $X$.

%%%%%%%%%%%%%%%%%%%%%%%%%%%%%%%%
\begin{lemma}\label{lem:f,h C1 bound deg=0}
For any $p \ge 1$, we have
$\|(u_{1,\ve},\cdots,u_{n,\ve})\|_{W^{2,p}_{loc} \times\cdots\times  W^{2,p}_{loc}} \le C$ and $\|f_\ve\|_{C^0_{loc} } \le C$.
\end{lemma}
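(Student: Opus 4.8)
The plan is to run a coupled bootstrap between the two equations \eqref{eq:u,v eqn via f-epsilon} and \eqref{eq:f-epsilon eqn}, the whole difficulty being that the coefficient $\ve^2$ in front of $\Delta f_\ve$ in \eqref{eq:f-epsilon eqn} degenerates as $\ve\to0$. Two facts will tame it: the crude global bound $0\le f_\ve\le n/\ve^2$ (immediate from \eqref{eq:f-epsilon} and \eqref{eq:infty}), and the uniform lower bound $a_\ve:=2\sum_{j=1}^n|u_{j,\ve}|^2\ge c_0>0$ on $\Om$ for all small $\ve$, which follows from Lemma \ref{lem:unif conv in L-infty deg=0}. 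As input, Lemma \ref{lem:loc H2 bound deg=0} gives $u_{j,\ve}$ bounded in $H^2_{loc}$; since $\dim\Om=2$, Sobolev embedding yields $\nabla u_{j,\ve}$ bounded in $L^p_{loc}$ for every $p<\infty$, so $g_\ve:=2\sum_{j=1}^n|\nabla u_{j,\ve}|^2$ is bounded in $L^q_{loc}$ for every $q<\infty$. Moreover, writing $f_\ve=\re(-\Delta u_{k,\ve}\,\overline{u_{k,\ve}})/|u_{k,\ve}|^2$ for an index $k$ with $|u_{k,\ve}|\ge 1/2$ shows $f_\ve$ is bounded in $L^2_{loc}$.

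Next I would raise the integrability of $f_\ve$ just past the Sobolev threshold $p=2$. Testing \eqref{eq:f-epsilon eqn} against $\zeta^2 f_\ve^{q-1}$, where $\zeta\in\calF(K,\tilde K)$ is a cutoff, integrating by parts, discarding the nonnegative term $\ve^2(q-1)\int\zeta^2 f_\ve^{q-2}|\nabla f_\ve|^2\,dx$, and using $a_\ve\ge c_0$ together with Young's inequality leads to
\[
\tfrac{c_0}{2}\int_\Om\zeta^2 f_\ve^q\,dx\le C_q\int_\Om\zeta^2 g_\ve^q\,dx+C_q\ve^2\int_\Om|\nabla\zeta|^2 f_\ve^q\,dx.
\]
For a fixed exponent $q\in(2,3)$ the last term is harmless: bounding $f_\ve^q=f_\ve^{q-2}f_\ve^2\le(n/\ve^2)^{q-2}f_\ve^2$ and invoking the $L^2_{loc}$ bound turns it into $C\ve^{2(3-q)}\to0$. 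Since $g_\ve$ is bounded in $L^q_{loc}$, this gives a uniform bound for $f_\ve$ in $L^q_{loc}$ with $q\in(2,3)$.

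With $f_\ve$ bounded in some $L^q_{loc}$, $q>2$, the right-hand side $f_\ve u_{j,\ve}$ of \eqref{eq:u,v eqn via f-epsilon} is bounded in $L^q_{loc}$, so $L^q$ elliptic regularity makes $u_{j,\ve}$ bounded in $W^{2,q}_{loc}\hookrightarrow C^{1,\alpha}_{loc}$; in particular $\nabla u_{j,\ve}$, and hence $g_\ve$, is now bounded in $L^\infty_{loc}$. This upgrade of $g_\ve$ is what unlocks the $C^0$ bound. Set $\phi_\ve:=n-\sum_{j=1}^n|u_{j,\ve}|^2=\ve^2 f_\ve\ge0$, which by \eqref{eq:f-epsilon eqn} solves $-\Delta\phi_\ve+\ve^{-2}a_\ve\phi_\ve=g_\ve$; since $\phi_\ve\ge0$ and $a_\ve\ge c_0$, on any ball $B_{2r}(x_0)\Subset\Om$ it is a subsolution of $-\Delta\phi_\ve+\ve^{-2}c_0\phi_\ve\le G$, where $G:=\|g_\ve\|_{L^\infty(B_{2r}(x_0))}$. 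Comparing $\phi_\ve$ with the explicit supersolution $\ve^2 G/c_0$ plus a boundary-layer barrier decaying like $\exp(-\sqrt{c_0}\,|x-x_0|/\ve)$, and using the trivial bound $\phi_\ve\le n$ on $\partial B_{2r}(x_0)$, yields $\|f_\ve\|_{L^\infty(B_r(x_0))}\le G/c_0+Ce^{-c/\ve}\,n\ve^{-2}\le C$, that is, $\|f_\ve\|_{C^0_{loc}}\le C$.

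Finally, once $f_\ve$ is bounded in $L^\infty_{loc}$, the right-hand side $f_\ve u_{j,\ve}$ of \eqref{eq:u,v eqn via f-epsilon} is bounded in $L^p_{loc}$ for every $p<\infty$, and $L^p$ elliptic regularity gives the asserted bound of $(u_{1,\ve},\cdots,u_{n,\ve})$ in $W^{2,p}_{loc}\times\cdots\times W^{2,p}_{loc}$ for all $p$. I expect the \emph{main obstacle} to be precisely the two places where the degenerate factor $\ve^2$ appears: getting past $p=2$ in the integrability of $f_\ve$, and the $C^0$ bound itself. Both are overcome by trading the otherwise useless crude bound $f_\ve\le n/\ve^2$ against the good zero-order coercivity $a_\ve\ge c_0>0$ --- in the first case through the algebraic gain $\ve^{2(3-q)}$, in the second through the exponentially small boundary-layer factor $e^{-c/\ve}$ that beats $\ve^{-2}$.
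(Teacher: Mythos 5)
Your argument is correct and reaches the same conclusions by the same overall bootstrap between \eqref{eq:u,v eqn via f-epsilon} and \eqref{eq:f-epsilon eqn}, but the two delicate steps are handled by genuinely different mechanisms than in the paper. For the $L^p$ bound on $f_\ve$, the paper never meets your obstruction at $q=3$: it sets $F_\ve=\vp f_\ve$ and rewrites the cutoff commutator using $\ve^2\nabla f_\ve=-\nabla\big(\sum_j|u_{j,\ve}|^2\big)=-2\sum_j u_{j,\ve}\nabla u_{j,\ve}$, so the degenerate factor $\ve^2$ disappears pointwise and one gets the clean inequality $-\ve^2\Delta F_\ve+2F_\ve\le C_K(1+\sum_j|\nabla u_{j,\ve}|^2)$, hence $\|F_\ve\|_{L^p}\le C_K$ for \emph{every} $p$ in one pass by testing with $F_\ve^{p-1}$ and H\"older. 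Your route instead pays the price $\ve^2\int|\nabla\zeta|^2f_\ve^q$ and buys it back by interpolating the crude bound $f_\ve\le n/\ve^2$ against the $L^2_{loc}$ bound; this caps you at $q<3$, which is enough since any $q>2$ restarts the elliptic bootstrap. For the $C^0$ bound, the paper again avoids barriers: it applies the maximum principle directly to $\tilphi f_\ve$, whose maximum is forced into the interior by the cutoff, so $2\tilphi f_\ve\le C_K(1+\sum_j\|\nabla u_{j,\ve}\|_{L^\infty}^2)$ at that point and hence everywhere. Your comparison argument also works, with one small repair: in two dimensions a radial exponential $e^{-\sqrt{c_0}\,|x-x_0|/\ve}$ is a \emph{sub}solution of $-\Delta+c_0\ve^{-2}$ near the center because of the $\rho^{-1}\partial_\rho$ term, so the barrier should be taken as, e.g., $n\,I_0(\sqrt{c_0}\,|x-x_0|/\ve)/I_0(2\sqrt{c_0}\,r/\ve)$ or $n\big[\cosh(\lambda(x_1-x_{0,1}))+\cosh(\lambda(x_2-x_{0,2}))\big]/\cosh(\sqrt2\,\lambda r)$ with $\lambda=\sqrt{c_0}/(\sqrt2\,\ve)$, which still yields the exponentially small factor you need. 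Net comparison: the paper's structural identity for $\ve^2\nabla f_\ve$ makes its proof shorter and uniform in $p$; your version is more generic (it uses only the sign structure and the crude a priori bound, not the specific algebraic form of $f_\ve$) and would transfer more easily to perturbations of the system.
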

\begin{proof}
By Lemma \ref{lem:unif conv in L-infty deg=0}, we may assume that $\sum_{j=1}^n|u_{j,\ve}|^2 \ge 1$ on $\Om$.
Given $K\Subset \Om$, we denote $K=K_0$ and choose compact sets $K_j \in \calA_{K_{j-1}}$ for $j=1,2,3$ and $\vp  \in \calF(K_3,K_2)$.
Set $F_\ve = \vp  f_\ve$.
Then,
\begin{align*}
-\ve^2 \Delta F_\ve &= -\ve^2 f_\ve \Delta \vp - 2 \ve^2 \nabla \vp \cdot \nabla f_\ve - \ve^2 \vp \Delta f_\ve\\
&= -\Big(n-\sum_{j=1}^n|u_{j,\ve}|^2\Big) \Delta \vp  +4 \sum_{j=1}^nu_{j,\ve} (\nabla \vp \cdot \nabla u_{j,\ve})\\
&\qquad  -2F_\ve \sum_{j=1}^n|u_{j,\ve}|^2 + 2 \vp \sum_{j=1}^n|\nabla u_{j,\ve}|^2 \\
&\le -2F_\ve + C_K\Big( 1+ \sum_{j=1}^n|\nabla u_{j,\ve}|^2\Big).
\end{align*}
Hence,
\begin{equation}
\label{eq:vp f-ve ineq}
-\ve^2 \Delta F_\ve  +2F_\ve  \le C_K\Big( 1+ \sum_{j=1}^n|\nabla u_{j,\ve}|^2\Big) \qon K_3.
\end{equation}
For $p>2$, multiplying \eqref{eq:vp f-ve ineq} by $F_\ve^{p-1} $, we are led to
\begin{align*}
& \ve^2 (p-1) \int_{K_3} |\nabla F_\ve|^2   F_\ve^{p-2}dx + 2 \int_{K_3} F_\ve^pdx \\
 \le~&C_K \Big( \int_{K_3}\Big( 1+ \sum_{j=1}^n|\nabla u_{j,\ve}|^2\Big) ^pdx \Big)^{\frac{1}{p} } \Big( \int_{K_3}  F_\ve^p dx\Big)^{\frac{p-1}{p} } \\
 \le ~& C_K \| (u_{1,\ve},\cdots,u_{n,\ve})\|_{H^2 (K_3)} \| F_\ve \|_{L^p(K_3)}^{p-1} ~\le ~ C_K \| F_\ve \|_{L^p(K_3)}^{p-1}.
\end{align*}
Here, we used   Lemma \ref{lem:loc H2 bound deg=0} in the last inequality.
Then, we deduce from Young's inequality that $\| F_\ve \|_{L^p({K_3})} \le C_K$.
This implies by \eqref{eq:u,v eqn via f-epsilon} that $\| \Delta u_{j,\ve}\|_{L^p (K_2)} \le C_K$  for any $p>2$ and $j=1,\cdots,n$.
So,
\[ \| (u_{1,\ve},\cdots,u_{n,\ve}) \|_{W^{2,p}(K_1)}\le  C_K
\]
 by the interior regularity.
 Moreover, $\|   (u_{1,\ve},\cdots,u_{n,\ve}) \|_{C^1(K_1)} \le C_K$ by the Sobolev embedding.

Now choose $\tilphi \in \calF(K,K_1)$ and set $\tilF_\ve =\tilphi f_\ve$.
Then, as above, we have
\[  -\ve^2 \Delta \tilF_\ve  +2\tilF_\ve  \le C_K\Big( 1+ \sum_{j=1}^n|\nabla u_{j,\ve}|^2\Big) \qon K_1.
\]
By applying the maximum principle to this inequality, we are led to
\[ \max_{K_1} \tilF_\ve \le C_K\Big( 1+ \sum_{j=1}^n \|\nabla u_{j,\ve}\|_{L^\infty(K_1)}^2\Big)\le C_K.
\]
In particular, we conclude that $\| f_\ve\|_{L^\infty(K)} \le C_K$.
\end{proof}
%%%%%%%%%%%%%%%%%%%%%%%%%%%%%%%%%%%

%%%%%%%%%%%%%%%%%%%%%%%%%%%%%%%%
\begin{lemma}\label{lem:f,h Ck bound deg=0}
Given a nonnegative integer $k$ and a real number $p \ge 1$, we have
\begin{align}
\label{eq:u estimate k-step}
& \|(u_{1,\ve},\cdots,u_{n,\ve})\|_{W^{k+2,p}_{loc} \times\cdots\times W^{k+2,p}_{loc}} \le C  \qand      \| f_\ve\|_{C^k_{loc}   }\le C.
 \end{align}
\end{lemma}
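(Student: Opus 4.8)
I would prove \eqref{eq:u estimate k-step} by induction on $k$, the case $k=0$ being exactly Lemma~\ref{lem:f,h C1 bound deg=0}. So suppose the estimates hold at level $k-1$, i.e. $\|(u_{1,\ve},\cdots,u_{n,\ve})\|_{W^{k+1,p}_{loc}}\le C$ for all $p$ and $\|f_\ve\|_{C^{k-1}_{loc}}\le C$, and aim to gain one order. The engine is a pair of transfer mechanisms: interior $L^p$-regularity for the Poisson equation \eqref{eq:u,v eqn via f-epsilon}, which carries regularity from $f_\ve$ to the $u_{j,\ve}$ and gains two derivatives; and the coercivity argument already used in Lemma~\ref{lem:f,h C1 bound deg=0}, applied now to the differentiated form of \eqref{eq:f-epsilon eqn}, which carries regularity from the $u_{j,\ve}$ back to $f_\ve$. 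Throughout, $p>2$ is fixed and large; the final claim for every $p\ge1$ then follows on compact sets from H\"older's inequality.

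\textbf{Step 1 (an $L^p$ bound on $\pa^k f_\ve$).} For $|\alpha|=k$ put $g_\ve=\pa^\alpha f_\ve$ and differentiate \eqref{eq:f-epsilon eqn} to get
\begin{equation*}
-\ve^2\Delta g_\ve + 2\Big(\sum_{j=1}^n|u_{j,\ve}|^2\Big)g_\ve = R_\ve ,
\end{equation*}
where $R_\ve$ gathers $\pa^\alpha\big(2\sum_j|\nabla u_{j,\ve}|^2\big)$ together with the commutator terms in which at most $|\alpha|-1\le k-1$ derivatives fall on $f_\ve$. Its top-order piece $\sum_j\pa^\alpha\nabla u_{j,\ve}\cdot\nabla u_{j,\ve}$ is controlled in $L^p_{loc}$ by $\|u_{j,\ve}\|_{W^{k+1,p}_{loc}}$ and $\|\nabla u_{j,\ve}\|_{L^\infty}$, while every other term involves only derivatives of $u_{j,\ve}$ of order $\le k$ and of $f_\ve$ of order $\le k-1$; all are bounded by the level-$(k-1)$ hypothesis, so $\|R_\ve\|_{L^p_{loc}}\le C$. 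The crucial structural point is that the zeroth-order coefficient $2\sum_j|u_{j,\ve}|^2\ge2$ is uniformly positive and survives differentiation. Choosing nested compact sets and a cutoff $\vp\in\calF(K_3,K_2)$ as before, testing the equation for $\vp g_\ve$ against $|\vp g_\ve|^{p-2}(\vp g_\ve)$ and integrating, the singular term $-\ve^2\Delta(\vp g_\ve)$ produces a nonnegative principal part plus cutoff errors while the coercive term yields $2\int|\vp g_\ve|^p$; this gives $\|\pa^\alpha f_\ve\|_{L^p_{loc}}\le C$ uniformly in $\ve$, hence $\|f_\ve\|_{W^{k,p}_{loc}}\le C$.

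\textbf{Step 2 (upgrade $u_{j,\ve}$, then upgrade $f_\ve$).} Since $W^{k,p}_{loc}$ is a multiplication algebra for $kp>2$ (we are in the inductive range $k\ge1$) and $u_{j,\ve}\in W^{k+1,p}_{loc}$, the right-hand side $f_\ve u_{j,\ve}$ of \eqref{eq:u,v eqn via f-epsilon} lies in $W^{k,p}_{loc}$ with uniform norm, so interior $L^p$-elliptic regularity gives $\|(u_{1,\ve},\cdots,u_{n,\ve})\|_{W^{k+2,p}_{loc}}\le C$ for all $p$, and in particular $\|D^{k+1}u_{j,\ve}\|_{L^\infty_{loc}}\le C$ by Sobolev embedding. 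With this stronger bound the top-order term of $R_\ve$ is now bounded in $L^\infty_{loc}$, so returning to the equation for $g_\ve=\pa^\alpha f_\ve$ and applying the maximum principle to $\vp g_\ve$, exactly as at the end of the proof of Lemma~\ref{lem:f,h C1 bound deg=0}, bounds $\|\pa^\alpha f_\ve\|_{L^\infty_{loc}}$; ranging over all $|\alpha|=k$ yields $\|f_\ve\|_{C^k_{loc}}\le C$ and closes the induction.

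\textbf{Main obstacle.} The one genuinely delicate feature is the singular factor $\ve^2$ in front of $\Delta f_\ve$, which forbids any Calder\'on--Zygmund or Schauder estimate for \eqref{eq:f-epsilon eqn} with constants uniform in $\ve$. I would deal with it exactly as in \cite{BBH93}: never estimate $f_\ve$ through its own Laplacian, but only through the coercive zeroth-order term $2\sum_j|u_{j,\ve}|^2\ge2$, so that the $\ve^2\Delta$ contribution always enters with a favourable sign. The second subtlety is the apparent circularity between the two transfer mechanisms --- bounding $\pa^k f_\ve$ in $L^\infty$ seems to require the level-$k$ bound on $u_{j,\ve}$, which in turn requires $f_\ve\in C^k$. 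This is broken by the half-step ordering above: the cheaper $L^p$ bound on $\pa^k f_\ve$ needs only the level-$(k-1)$ bound on $u_{j,\ve}$, it suffices to upgrade $u_{j,\ve}$ to $W^{k+2,p}$, and only then is the $L^\infty$ bound on $\pa^k f_\ve$ (hence $f_\ve\in C^k$) recovered.
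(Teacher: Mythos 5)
Your overall architecture is exactly the paper's: induction on $k$, a coercivity/$L^p$ step for a cut-off version of $\pa^\alpha f_\ve$ using the zeroth-order term $2\sum_j|u_{j,\ve}|^2\ge 2$, an elliptic upgrade of $u_{j,\ve}$ to $W^{k+2,p}_{loc}$ through \eqref{eq:u,v eqn via f-epsilon}, and a final maximum-principle step to convert the $L^p$ bound on $\pa^\alpha f_\ve$ into an $L^\infty$ bound. The half-step ordering you describe to break the circularity is precisely the paper's ordering.

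There is, however, one point you dismiss as ``cutoff errors'' on which the paper spends a substantial sub-argument, and as written your proof has a hole there. When you pass from the equation for $g_\ve=\pa^\alpha f_\ve$ to the equation for $\vp g_\ve$, the commutator produces the terms $-\ve^2 g_\ve\,\Delta\vp$ and $-2\ve^2\,\nabla\vp\cdot\nabla g_\ve$; the second involves $k+1$ derivatives of $f_\ve$, i.e.\ one more than anything controlled at that stage, and in the maximum-principle step it must be bounded \emph{pointwise}. The paper handles this by first deriving the crude a priori bounds $\|\pa^{k+1}f_\ve\|_{L^\infty_{loc}}\le C\ve^{-1}$, $\|\pa^{k+2}u_{j,\ve}\|_{L^\infty_{loc}}\le C\ve^{-1/2}$ and $\|\pa^{k+2}f_\ve\|_{L^\infty_{loc}}\le C\ve^{-2}$ from Lemma \ref{lem:elliptic estimates}, so that each commutator term, carrying a prefactor $\ve^2$, is $O(1)$. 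Your proposal contains no substitute for this. The gap is fixable --- most cheaply by noting the identity $\ve^2\,\pa^\beta f_\ve=-\pa^\beta\big(\sum_j|u_{j,\ve}|^2\big)$ for $|\beta|\ge1$, which converts both commutator terms into derivatives of $u_{j,\ve}$ of order at most $|\alpha|+1$, controlled in $L^p_{loc}$ at the coercivity stage and in $L^\infty_{loc}$ at the maximum-principle stage (this is exactly how the $k=0$ case in Lemma \ref{lem:f,h C1 bound deg=0} proceeds); alternatively one can reproduce the paper's crude interpolation estimates. Either way, you need to say how these terms are tamed: they are not error terms that vanish for free, and the uniform positivity of $2\sum_j|u_{j,\ve}|^2$ does nothing to absorb them.
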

\begin{proof}
We use an induction on $k$.
The case $k=0$ follows from Lemma \ref{lem:f,h C1 bound deg=0}.
Suppose that \eqref{eq:u estimate k-step} is true for $k$.
By the Sobolev embedding, $\| (u_{1,\ve},\cdots,u_{n,\ve})\|_{C^{k+1}_{loc}} \le C$.
By   the induction hypothesis, we note from  \eqref{eq:f-epsilon eqn} that
\[   -\ve^2 \Delta \pa^k f_\ve =   \pa^k \Big(- 2 \sum_{j=1}^n|u_{j,\ve}|^2 f_\ve+ 2\sum_{j=1}^n|\nabla u_{j,\ve}|^2 \Big) =O(1)
\]
 on every compact set.
Then,  by applying   Lemma \ref{lem:elliptic estimates}, we obtain that
\begin{equation}
\label{eq:partial k+1 f}
\|\partial^{k+1} f_\ve\|_{L^\infty_{loc} } \le C \ve^{-1}.
\end{equation}
By \eqref{eq:u,v eqn via f-epsilon} and \eqref{eq:partial k+1 f},  on every compact set,
\[ -\Delta \pa^{k+1} u_{j,\ve} = \pa^{k+1} (f_\ve u_{j,\ve}) =O(\ve^{-1}).
\]
Hence, by  Lemma \ref{lem:elliptic estimates} again,
\begin{equation}
\label{eq:partial k+2 u}
\|\partial^{k+2} u_{j,\ve}\|_{L^\infty_{loc} } \le C \ve^{-1/2}.
\end{equation}
By \eqref{eq:partial k+1 f} and \eqref{eq:partial k+2 u}, we get
\[   -\ve^2 \Delta \pa^{k+1} f_\ve =   \pa^{k+1} \Big(- 2 \sum_{j=1}^n|u_{j,\ve}|^2 f_\ve+ 2\sum_{j=1}^n|\nabla u_{j,\ve}|^2 \Big) =O(\ve^{-1})
\]
 on every compact set.
Applying Lemma \ref{lem:elliptic estimates} to this equation, we are led to
\begin{equation}
\label{eq:partial k+2 f}
\|\partial^{k+2} f_\ve\|_{L^\infty_{loc} } \le C \ve^{-2}.
\end{equation}

Now, given $K\Subset \Om$, we denote $K=K_0$ and choose compact sets $K_j \in \calA_{K_{j-1}}$ for $j=1,2,3$ and $\vp  \in \calF(K_2,K_3)$.
Set $F_\ve = \vp  \pa^{k+1} f_\ve$.
It follows from \eqref{eq:partial k+1 f} and \eqref{eq:partial k+2 f} that
\begin{align*}
-\ve^2 \Delta F_\ve &= -\ve^2   \pa^{k+1} f_\ve \Delta \vp - 2 \ve^2 \nabla \vp \cdot   \nabla  \pa^{k+1} f_\ve - \ve^2 \vp \Delta   \pa^{k+1} f_\ve\\
&= O(1)  - \ve^2 \vp \Delta   \pa^{k+1} f_\ve.
\end{align*}
On the other hand, by \eqref{eq:f-epsilon eqn}
\begin{align*}
 - \ve^2 \vp \Delta   \pa^{k+1} f_\ve &= - 2  \sum_{j=1}^n|u_{j,\ve}|^2F_\ve  - 2\vp \sum_{i+l = k+1, ~l<k+1}\pa^{i} \Big(\sum_{j=1}^n|u_{j,\ve} |^2\Big) \pa^l f_\ve \\
 & \qquad + 2 \vp \pa^{k+1} \Big(\sum_{j=1}^n|\nabla u_{j,\ve} |^2\Big)\\
&\le -2 F_\ve + C_K \Big( 1+ \sum_{j=1}^n|D^{k+2} u_{j,\ve}|\Big).
\end{align*}
Hence,
\begin{equation}
\label{eq:vp f-ve ineq k+1}
-\ve^2 \Delta F_\ve  +2F_\ve  \le C_K\Big( 1+ \sum_{j=1}^n|D^{k+2} u_{j,\ve}|\Big) \qon K_3.
\end{equation}
For $p>2$, multiplying \eqref{eq:vp f-ve ineq k+1} by $F_\ve^{p-1} $, we are led to
\begin{align*}
& \ve^2 (p-1) \int_{K_3} |\nabla F_\ve|^2   F_\ve^{p-2}dx + 2 \int_{K_3} F_\ve^p dx\\
 \le~&C_K \Big( \int_{K_3}\Big( 1+ \sum_{j=1}^n|D^{k+2} u_{j,\ve}|\Big)^p  dx\Big)^{\frac{1}{p} } \Big( \int_{K_3}  F_\ve^p dx\Big)^{\frac{p-1}{p} } \\
 \le ~& C_K \| (u_{1,\ve},\cdots,u_{n,\ve})\|_{W^{k+2,p} (K_3)} \| F_\ve \|_{L^p(K_3)}^{p-1} ~\le ~ C_K \| F_\ve \|_{L^p(K_3)}^{p-1},
\end{align*}
which implies that   $\| F_\ve \|_{L^p({K_3})} \le C_K$.
As a consequence, by \eqref{eq:u,v eqn via f-epsilon}
\[ \| \Delta \pa^{k+1} u_{j,\ve}\|_{L^p (K_2)} \le C_K \qfor j=1,\cdots,n.
\]
So,
\begin{equation}
\label{eq:induction k+1 u}
 \| (u_{1,\ve},\cdots,u_{n,\ve}) \|_{W^{k+3,p}(K_1)  }\le  C_K.
\end{equation}
In particular,   $\| \pa^{k+2} ( u_{1,\ve},\cdots,u_{n,\ve}) \|_{C^0(K_1)} \le C_K$.

Now choose $\tilphi \in \calF(K,K_1)$ and set $\tilF_\ve =\tilphi \pa^{k+1} f_\ve$.
As above,  we obtain
\[  -\ve^2  \Delta \tilF_\ve  +2\tilF_\ve  \le C_K\Big( 1+ \sum_{j=1}^n|D^{k+2} u_{j,\ve}|\Big) \qon K_1.
\]
Then, we deduce from the maximum principle that
\[ \max_{K_1} \tilF_\ve \le C_K\Big(1+ \sum_{j=1}^n\|D^{k+2} u_{j,\ve}\|_{L^\infty(K_1)}\Big)\le C_K.
\]
In the sequel, we conclude that
\begin{equation}
\label{eq:induction k+1 f}
\|\pa^{k+1}  f_\ve\|_{L^\infty(K)} \le C_K.
\end{equation}
Therefore, \eqref{eq:u estimate k-step} is true for $k+1$ by \eqref{eq:induction k+1 u} and \eqref{eq:induction k+1 f}.
This completes the proof.
\end{proof}
%%%%%%%%%%%%%%%%%%%%%%%%%%%%%%%%%%%

%%%%%%%%%%%%%%%%%%%%%%%%%%%%%%%%%%
%%%%%%%%%%%%%%%%%%%%%%%%%%%%%%%%%
%%%%%%%%%%%%%%%%%%%%%%%%%%%%%%%%%%%
\section{Boundary Estimates and Proof of  Theorem  \ref{thm:conv} }\label{sec:Boundary Estimates}
\setcounter{equation}{0}

 %%%%%%%%%%%%%%%%%%%%%%%%%%%
\begin{lemma}
\label{lem:pohozaev}
For any solution of problem \eqref{eq:semilocal GL} there is a constant $C$ such that  for all $\ve>0$,
\begin{equation}
\label{eq:pohozaev}
\int_{\partial\Om}  \sum_{j=1}^n\Big|\frac{\partial u_{j,\e}}{\partial\nu} \Big|^2d\sigma\leq C.
\end{equation}
Here, $\nu$ is the outward unit normal vector field on $\pa \Om$.
\end{lemma}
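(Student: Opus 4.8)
The plan is to establish a Rellich--Pohozaev identity for the system \eqref{eq:semilocal GL} using a multiplier adapted to the boundary. Since $\Om$ is only assumed smooth and simply connected (not star-shaped), I would not use the dilation field $x\cdot\nabla u_{j,\ve}$, but instead fix a smooth vector field $X\in C^\infty(\overline\Om;\rtwo)$ with $X=\nu$ on $\pa\Om$; such an extension of the outward normal exists because $\pa\Om$ is smooth. The starting point is the identity obtained by multiplying $-\Delta u_{j,\ve}=f_\ve u_{j,\ve}$ by $X\cdot\nabla u_{j,\ve}$ (in the $\rtwo$-valued, real inner product sense), summing over $j$, and integrating over $\Om$.

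Integrating the left-hand side by parts twice (the Rellich--Ne\v{c}as computation) produces the boundary term $\int_{\pa\Om}\sum_j\big[(X\cdot\nabla u_{j,\ve})\cdot\pa_\nu u_{j,\ve}-\tfrac12(X\cdot\nu)|\nabla u_{j,\ve}|^2\big]\,d\sigma$ together with the interior terms $-\into(\pa_iX^k)\sum_j\pa_iu_{j,\ve}\cdot\pa_ku_{j,\ve}\,dx+\tfrac12\into(\mathrm{div}\,X)\sum_j|\nabla u_{j,\ve}|^2\,dx$. On $\pa\Om$, where $X=\nu$, the decomposition $\nabla u_{j,\ve}=(\pa_\nu u_{j,\ve})\nu+(\pa_\tau u_{j,\ve})\tau$ collapses the boundary integrand to $\tfrac12\big(|\pa_\nu u_{j,\ve}|^2-|\pa_\tau u_{j,\ve}|^2\big)$. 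Both interior terms are bounded in absolute value by $C\into\sum_j|\nabla u_{j,\ve}|^2\,dx$ since $\|\nabla X\|_\infty$ and $\|\mathrm{div}\,X\|_\infty$ are finite, and this quantity is uniformly bounded by \eqref{eq:unif bound in H1 deg=0}.

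The key point is the right-hand side $\into f_\ve\sum_j u_{j,\ve}\cdot(X\cdot\nabla u_{j,\ve})\,dx$. Writing $W_\ve=n-\sum_j|u_{j,\ve}|^2$, I would use $\sum_j u_{j,\ve}\cdot\nabla u_{j,\ve}=-\tfrac12\nabla W_\ve$ and $f_\ve=\ve^{-2}W_\ve$ to rewrite this as $-\tfrac1{4\ve^2}\into X\cdot\nabla(W_\ve^2)\,dx$. One further integration by parts gives $\tfrac1{4\ve^2}\into(\mathrm{div}\,X)W_\ve^2\,dx-\tfrac1{4\ve^2}\int_{\pa\Om}(X\cdot\nu)W_\ve^2\,d\sigma$; the boundary integral vanishes because each $g_j$ maps into $S^1$, so $W_\ve=n-\sum_j|g_j|^2=0$ on $\pa\Om$. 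The surviving volume term is controlled by $\|\mathrm{div}\,X\|_\infty\,\ve^{-2}\into W_\ve^2\,dx$, which is uniformly bounded by the energy estimate \eqref{eq:energy upper bd d1d2=0} (indeed it tends to $0$ by \eqref{potentiel d = 0}).

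Collecting the three contributions yields $\tfrac12\int_{\pa\Om}\sum_j|\pa_\nu u_{j,\ve}|^2\,d\sigma=\tfrac12\int_{\pa\Om}\sum_j|\pa_\tau u_{j,\ve}|^2\,d\sigma+O(1)$. Since $u_{j,\ve}=g_j$ on $\pa\Om$, the tangential derivative is $\pa_\tau u_{j,\ve}=\pa_\tau g_j$, which is fixed and smooth, so $\int_{\pa\Om}\sum_j|\pa_\tau u_{j,\ve}|^2\,d\sigma=\int_{\pa\Om}\sum_j|\pa_\tau g_j|^2\,d\sigma$ is a constant independent of $\ve$; this gives \eqref{eq:pohozaev}. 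The main obstacle is conceptual rather than computational: because $\Om$ is only smooth and simply connected, the multiplier must be the extended normal field $X$ (so that the coefficient $\tfrac12(X\cdot\nu)$ equals the definite constant $\tfrac12$ on $\pa\Om$) rather than a radial field, and the vanishing of the dangerous nonlinear boundary term hinges precisely on $W_\ve\equiv 0$ on $\pa\Om$, which comes from $|g_j|=1$.
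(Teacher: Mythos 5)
Your proposal is correct and follows essentially the same route as the paper: the same multiplier $X\cdot\nabla u_{j,\ve}$ with $X=\nu$ on $\pa\Om$, the same integration by parts reducing the boundary integrand to $\tfrac12(|\pa_\nu u_{j,\ve}|^2-|\pa_\tau u_{j,\ve}|^2)$, and the same rewriting of the nonlinear term as $-\tfrac1{4\ve^2}\into X\cdot\nabla(W_\ve^2)\,dx$ controlled by the energy bound. The paper's proof is just a more compressed version of your computation, so no further comparison is needed.
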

\begin{proof}
For simplicity, we drop the subscript $\ve$.
Let us  multiply   \eqref{eq:semilocal GL} by $V \cdot \nabla u_{i}$,
where $V=(V_1,V_2)$ is a smooth vector field on $\Om$ such that $V=\nu$ on $\partial \Om$.
By integrating them by part,  we obtain
\[ \text{(LHS)}=\int_\Om \Delta u_i(V\cdot \nabla u_i)~dx= \int_{\partial\Om}\Big|\frac{\partial u_i}{\partial\nu}\Big|^2d\sigma-\frac{1}{2}\int_{\partial\Om} |\nabla u_i|^2d\sigma +O(1)
\]
since $\nabla u_i$ is uniformly  bounded in $L^2(\Om)$.
On the other hand,
\[\text{(RHS)} =\frac{1}{2\ve^2}\int_\Om \Big(n -\sum_{j=1}^n|u_{j}|^2\Big)\big(V\cdot\nabla |u_i|^2\big)dx.
\]
Adding these for each $i=1,\cdots,n$, we have
\begin{align*}
&\sum_{i=1}^n\Big(\int_{\partial\Om}\Big|\frac{\partial u_i}{\partial\nu}\Big|^2d\sigma-\frac{1}{2}\int_{\partial\Om} |\nabla u_i|^2d\sigma\Big)\\
=&~ \frac{1}{2\ve^2}\int_\Om \Big(n -\sum_{j=1}^n|u_{j}|^2\Big)\sum_{i=1}^n\big(V\cdot\nabla |u_i|^2\big)dx+O(1) \\
=& ~\frac{1}{4\ve^2}\int_\Om \Big(n -\sum_{j=1}^n|u_{j}|^2\Big)^2 (\nabla \cdot  V)dx +O(1) =O(1),
\end{align*}
where the last inequality comes from \eqref{eq:energy upper bd d1d2=0}.
Combining these identities, we conclude that
\[\sum_{i=1}^n \int_{\partial\Om}\Big|\frac{\partial u_i}{\partial\nu}\Big|^2d\sigma
= \sum_{i=1}^n  \int_{\partial\Om} \Big|\frac{\partial g_i}{\partial\tau}\Big|^2d\sigma+O(1)=O(1).
\]
Here, $\tau$ is the tangential vector field on $\pa \Om$.
\end{proof}
%%%%%%%%%%%%%%%%%%%%%%%%

In this section, the estimates of solutions for problem \eqref{eq:semilocal GL} would be proved up to the boundary.

\begin{lemma}\label{lem:H2 bound}
$\|(u_{1,\ve},\cdots,u_{n,\ve})\|_{H^2 (\Om) \times\cdots\times H^2  (\Om) } \le C$.
\end{lemma}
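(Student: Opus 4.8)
The plan is to integrate a Bochner-type inequality over all of $\Om$, convert the leading term into a boundary integral that is controlled by the Pohozaev estimate (Lemma \ref{lem:pohozaev}), and then absorb the remaining bulk term $\int_\Om\sum_j|\nabla u_{j,\ve}|^4$ by exploiting non-concentration of the energy. By Lemma \ref{lem:unif conv in L-infty deg=0} we may assume $\sum_{j=1}^n|u_{j,\ve}|^2\ge 1$ on $\Om$, so the pointwise computation leading to \eqref{eq:D2u ineq} in Lemma \ref{lem:loc H2 bound deg=0} is valid on all of $\Om$ and gives
\begin{equation}
\label{eq:plan-bochner}
\frac12\sum_{j=1}^n|D^2 u_{j,\ve}|^2\le \frac12\sum_{j=1}^n\Delta\big(|\nabla u_{j,\ve}|^2\big)+C\sum_{j=1}^n|\nabla u_{j,\ve}|^4 \qin\Om.
\end{equation}
Integrating over $\Om$ and applying the divergence theorem to the first term on the right replaces it by the boundary integral $\frac12\int_{\pa\Om}\sum_j\frac{\pa}{\pa\nu}|\nabla u_{j,\ve}|^2\,d\sigma$.

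First I would record a non-concentration property: for every $\eta>0$ there exist $\rho>0$ and $\ve_0>0$ such that $\int_{B_\rho(x_0)\cap\Om}\sum_j|\nabla u_{j,\ve}|^2\,dx<\eta$ for all $x_0\in\overline\Om$ and all $\ve<\ve_0$. This follows from the strong $H^1$ convergence of Proposition \ref{prop:conv in H^1}: the fixed limit density $\sum_j|\nabla u_j^*|^2\in L^1(\Om)$ has uniformly absolutely continuous integral, so its mass on small balls is uniformly small, and the $H^1$-error $\|\nabla(u_{j,\ve}-u_j^*)\|_{L^2(\Om)}\to0$ transfers this to the sequence uniformly in $x_0$. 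Given this, on a ball $B_{2\rho}(x_0)$ with a cutoff $\zeta$ ($\zeta\equiv1$ on $B_\rho(x_0)$, $\supp\zeta\subset B_{2\rho}(x_0)$, $|\nabla\zeta|\le C/\rho$) the Sobolev embedding $W^{1,1}\hookrightarrow L^2$ applied to $\zeta|\nabla u_{j,\ve}|^2$ together with Cauchy--Schwarz yields $\int_{B_\rho}|\nabla u_{j,\ve}|^4\le C\eta^2\rho^{-2}+C\eta\int_{B_{2\rho}}|D^2 u_{j,\ve}|^2$. Summing over a finite covering of $\overline\Om$ of bounded overlap gives
\begin{equation}
\label{eq:plan-L4}
\int_\Om\sum_{j=1}^n|\nabla u_{j,\ve}|^4\,dx\le C_\eta+C\eta\int_\Om\sum_{j=1}^n|D^2 u_{j,\ve}|^2\,dx.
\end{equation}

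The main obstacle is the boundary integral, which genuinely involves second derivatives and must be reorganized into quantities we control. Working in boundary-adapted (arclength, distance) coordinates, I write $\frac{\pa}{\pa\nu}|\nabla u_{j,\ve}|^2=2u_{j,\nu}\,u_{j,\nu\nu}+2u_{j,\tau}\,u_{j,\tau\nu}+(\text{curvature})(|u_{j,\nu}|^2+|u_{j,\tau}|^2)$, where $u_{j,\tau}=\pa g_j/\pa\tau$ and $u_{j,\tau\tau}=\pa^2 g_j/\pa\tau^2$ are bounded boundary data. The crucial point is that $f_\ve=0$ on $\pa\Om$, because $|g_j|=1$ forces $n-\sum_j|g_j|^2=0$; hence \eqref{eq:u,v eqn via f-epsilon} gives $\Delta u_{j,\ve}=0$ on $\pa\Om$, which lets me solve for $u_{j,\nu\nu}$ in terms of $u_{j,\tau\tau}$, $u_{j,\nu}$ and the curvature. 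For the mixed term I use $u_{j,\tau\nu}=\pa_\tau u_{j,\nu}+(\text{curvature})u_{j,\nu}$ and integrate by parts along the closed curve $\pa\Om$ to move the tangential derivative onto $g_j$. After these reductions every surviving term is estimated by $\|\pa g_j/\pa\tau\|_{L^2(\pa\Om)}$, $\|\pa^2 g_j/\pa\tau^2\|_{L^2(\pa\Om)}$ (bounded) and $\|\pa u_{j,\ve}/\pa\nu\|_{L^2(\pa\Om)}$, which is bounded by Lemma \ref{lem:pohozaev}. This yields $\frac12\int_{\pa\Om}\sum_j\frac{\pa}{\pa\nu}|\nabla u_{j,\ve}|^2\,d\sigma\le C$.

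Finally I combine: integrating \eqref{eq:plan-bochner}, inserting the boundary bound and \eqref{eq:plan-L4}, I obtain $\frac12\int_\Om\sum_j|D^2 u_{j,\ve}|^2\le C+CC_\eta+CC\eta\int_\Om\sum_j|D^2 u_{j,\ve}|^2$. Choosing $\eta$ small enough that the coefficient of the last term is below $\frac14$, I absorb it into the left-hand side and conclude $\int_\Om\sum_j|D^2 u_{j,\ve}|^2\,dx\le C$. Together with the uniform $H^1$ bound \eqref{eq:unif bound in H1 deg=0} this gives $\|(u_{1,\ve},\cdots,u_{n,\ve})\|_{H^2(\Om)\times\cdots\times H^2(\Om)}\le C$, as claimed. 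I expect the boundary computation above to be the only delicate step; the interior mechanism is essentially that of Lemma \ref{lem:loc H2 bound deg=0}, now made global by the non-concentration input.
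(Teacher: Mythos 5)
Your argument is correct and rests on the same three pillars as the paper's proof --- the Bochner-type inequality \eqref{eq:D2u ineq}, the Pohozaev bound of Lemma \ref{lem:pohozaev}, and the vanishing of $\Delta u_{j,\ve}$ on $\pa\Om$ (since $\sum_j|g_j|^2=n$ there forces $f_\ve=0$ on the boundary) --- but the packaging is genuinely different. The paper never integrates over all of $\Om$: near each boundary point it flattens $\pa\Om$ by a change of variables, rewrites the system with a divergence-form operator $L$, derives $-L[A]+\frac{\alpha}{2}\sum_j|D^2u_j|^2\le C\big(1+\sum_j|\nabla u_j|^4\big)$ for $A=\frac12\sum_j|\nabla u_j|^2$, multiplies by a cutoff $\zeta^2$, and integrates by parts on the half-ball, bounding the resulting boundary integrals on $\{y_2=0\}$ term by term via Lemma \ref{lem:pohozaev} and $Lu_j=0$ there; a covering argument then combines these local estimates with Lemma \ref{lem:loc H2 bound deg=0}. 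You instead integrate the flat Bochner inequality globally and compute the single flux $\int_{\pa\Om}\pa_\nu|\nabla u_{j,\ve}|^2\,d\sigma$ directly in boundary-adapted coordinates, which avoids the change of variables and the boundary cutoff at the price of some curvature bookkeeping; both routes ultimately reduce the boundary contribution to quantities controlled by $\|\pa u_{j,\ve}/\pa\nu\|_{L^2(\pa\Om)}$ and the data $g_j$. Your treatment of the quartic term is in fact more explicit than the paper's: absorbing $C\int\zeta^2\sum_j|\nabla u_{j,\ve}|^4$ into $\int\zeta^2\sum_j|D^2u_{j,\ve}|^2$ via $W^{1,1}\hookrightarrow L^2$ requires the Dirichlet energy on the support of the cutoff to be small, a point the paper (following \cite{BBH93}) leaves implicit, whereas you derive the needed uniform non-concentration from the strong $H^1$ convergence of Proposition \ref{prop:conv in H^1}. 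The one caveat is that Proposition \ref{prop:conv in H^1} is stated along a subsequence, so your non-concentration claim should either be phrased along that subsequence or recovered for the full family by a contradiction argument applied to an arbitrary sequence $\ve_k\to0$; this matches the level of precision used elsewhere in the paper and does not affect the conclusion.
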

\begin{proof}
In view of Lemma \ref{lem:loc H2 bound deg=0}, it suffices to prove uniform $H^2$-boundedness near each $x_0\in\partial\Om$.
We may assume $x_0=0$ and change local coordinates $(x_1,x_2)\to(y_1,y_2)=(x_1,x_2-h(x_1))$
where  $h$ represent $\partial\Om$ locally and   $h'(0)=0$.
We set $\tilu_{i,\ve}(y_1,y_2)=u_{i,\ve}(x_1,x_2)$ and $\tilde{g}_i(y_1,y_2)=g_i(x_1,x_2)$ on $U=\big\{(y_1,y_2) : y_2>0\big\}\cap B_r(0)$ for some $r>0$, and rewrite  \eqref{eq:semilocal GL} as
\[ \left\{
\begin{aligned}
-L\tilu_{i,\ve} &=\frac{1}{\ve^2}\tilu_{i,\ve}\Big(n-\sum_{j=1}^n|\tilu_{j,\ve}|^2\Big)  \qon  U,\\
\tilu_{i,\ve}& =\tilde{g}_{i,\ve} \qon \{y_2=0\}\cap\partial U.
\end{aligned}
\right.\]
Here,
\begin{align*}
L=\sum_{k,l=1}^2\frac{\partial}{\partial y_l}\Big(a_{kl}\frac{\partial}{\partial y_k}\Big) \mbox{ with $a_{11}=1$, $a_{12}=a_{21}=-h'$ and $a_{22}=1+(h')^2$}
\end{align*}
is a strongly elliptic operator if $r$ is small enough.
For simplicity, we write $\tilu_{j,\ve}$ as $u_j$ in what follows.

Let $A= \frac12\sum_{j=1}^n |\nabla u_j|^2$.
By  a direct calculation,
\begin{align*}
L [A] & =   \sum_{j=1}^n  \sum_{k,l,r=1}^2 \Big(a_{kl} (u_j)_{y_ky_r} (u_j)_{y_ly_r} + (u_j)_{y_r} \cdot L \big[ (u_j)_{y_r}\big]\Big) \\
& \ge \alpha \sum_{j=1}^n |D^2 u_j|^2 +\sum_{j=1}^n \sum_{r=1}^2 (u_j)_{y_r} \cdot L \big[ (u_j)_{y_r}\big],
\end{align*}
where $\al$ is the ellipticity constant of $L$.
We note that
\begin{align*}
&\sum_{j=1}^n \sum_{r=1}^2  (u_j)_{y_r} \cdot L \big[ (u_j)_{y_r}\big] \\
=~&   \sum_{j=1}^n \sum_{k,l,r=1}^2 (u_j)_{y_r} \cdot  \Big\{  \big(L[u_j]\big)_{y_r} - \big[ (a_{kl})_{y_r} (u_j)_{y_k} \big]_{y_l} \Big\}\\
=~&- \frac{1}{\ve^2}\sum_{j=1}^n |\nabla u_j|^2 \Big( n - \sum_{i=1}^n |u_i|^2\Big) + \frac{2}{\ve^2}  \sum_{r=1}^2 \Big(\sum_{j=1}^n u_j \cdot (u_j)_{y_r}\Big) \Big(\sum_{i=1}^n u_i \cdot (u_i)_{y_r} \Big) \\
& -\sum_{j=1}^n \sum_{k,l,r=1}^2   (u_j)_{y_r}  \big[ (a_{kl})_{y_ly_r} (u_j)_{y_k}+ (a_{kl})_{y_r} (u_j)_{y_ky_l} \big]  \\
\ge~& - \frac{1}{\ve^2}\sum_{j=1}^n |\nabla u_j|^2 \Big( n - \sum_{i=1}^n |u_i|^2\Big) - C\sum_{j=1}^n (|\nabla u_j|^2+ |D^2u_j|^2).
\end{align*}
By Lemma \ref{lem:unif conv in L-infty deg=0}, given $y \in U$, if $\ve$ is small, then we can find  $k=k(y)$ such that $|u_k(y)|\ge 1/2$.
Then,
\[ |L[u_k](y)| = \frac{1}{\ve^2} |u_k(y)| \Big( n - \sum_{j=1}^n |u_j(y)|^2 \Big) \ge  \frac{1}{2\ve^2}\Big( n - \sum_{j=1}^n |u_j(y)|^2 \Big).
\]
Hence,
\begin{align*}
 \frac{1}{ \ve^2}\Big( n - \sum_{j=1}^n |u_j(y)|^2 \Big) & \le 2|L[u_k]  (y) | =2 \Big| \Big( a_{kl}(y) (u_k)_{y_k}(y) \Big)_{y_l} \Big|  \\
 & \le C  \sum_{j=1}^n  ( |\nabla u_j(y)| + |D^2u_j(y)|).
\end{align*}
As a consequence, by Young's inequality,  we are led to
\[-L[A]+\frac{\alpha}{2}\sum_{j=1}^n |D^2 u_j|^2  \le C\Big(1 + \sum_{j=1}^n |\nabla u_j|^4  \Big).
\]

Let us choose $\zeta \in \calF(B_{r/2}(0),B_r(0))$.
Then,
\begin{equation}
\label{eq:H2 bdy ineq}
 \frac{\alpha}{2}\sum_{j=1}^n \int_U \zeta^2  |D^2 u_j|^2dx  \le C+ C \sum_{j=1}^n\int_U  \zeta^2  |\nabla u_j|^4 dx+ \int_U  \zeta^2 L[A]dx.
\end{equation}
By integrating by parts, we obtain
\begin{align*}
 \int_U  \zeta^2 L[A]dx&  =  \int_U  \zeta^2 \big[  a_{kl} A_{y_l} \big]_{y_k}dx\\
 & = \int_U AL[\zeta^2] dx+ 2 \int_{ \{y_2=0\} }   a_{12}A (\zeta^2)_{y_1} +   \int_{ \{y_2=0\} }   a_{22}A (\zeta^2)_{y_2}\\
 & \quad +   \int_{ \{y_2=0\} } \zeta^2 A ( a_{12} )_{y_1}   -     \int_{ \{y_2=0\} }  \zeta^2  a_{22}A_{y_2}  .
\end{align*}
The first four terms are uniformly bounded by Lemma \ref{lem:pohozaev}.
Furthermore,
\begin{align*}
&\int_{ \{y_2=0\} }  \zeta^2  a_{22}A_{y_2}\\
=~&  \sum_{j=1}^n  \Big(  \int_{ \{y_2=0\} }   \zeta^2  a_{22}   (u_j)_{y_1} \cdot (u_j)_{y_1y_2} +
   \zeta^2  a_{22}  (u_j)_{y_2} \cdot (u_j)_{y_2y_2} \Big) \\
 =~& \sum_{j=1}^n \big( (I)+(II) \big).
\end{align*}
By Lemma \ref{lem:pohozaev},
\[ (I)=-  \int_{ \{y_2=0\} } \Big(   (\zeta^2  a_{22})_{y_1}  (u_j)_{y_1} \cdot (u_j)_{ y_2} +  \zeta^2  a_{22}  (g_j)_{y_1y_1}  \cdot (u_j)_{ y_2}\Big)=O(1).
\]
Since  $Lu_j=0$  on $\{y_2=0\}$ and $a_{11}=1$, we have
\begin{align*}
(II) &=-  \int_{ \{y_2=0\} }  \zeta^2 \Big( (u_j)_{y_2} \cdot (g_j)_{y_1y_1}   +  (a_{12})_{y_1}|(u_j)_{y_2}|^2 \\
& \qquad +~(a_{21})_{y_2} (u_j)_{y_2} \cdot (g_j)_{y_1}+  (a_{22})_{y_2}  | (u_j)_{y_2}|^2\Big)  - \int_{ \{y_2=0\} }  \zeta^2 a_{12} ( | (u_j)_{y_2}|^2)_{y_1}\\
& = O(1) + \int_{ \{y_2=0\} }  (\zeta^2 a_{12})_{y_1}  | (u_j)_{y_2}|^2 ~=~ O(1).
\end{align*}
Here, we also used Lemma \ref{lem:pohozaev}.
In the sequel, we can rewrite \eqref{eq:H2 bdy ineq} as
\[  \frac{\alpha}{2}\sum_{j=1}^n \int_U \zeta^2  |D^2 u_j|^2dx  \le C+ C \sum_{j=1}^n\int_U  \zeta^2  |\nabla u_j|^4dx .
\]
Now, using this inequality and employing the same argument of the proof of Lemma \ref{lem:loc H2 bound deg=0}, we can show
\[   \sum_{j=1}^n \int_{B_{r/2}(x_0)}  |D^2 u_j|^2dx  \le C.
\]
This completes the proof.
\end{proof}
%%%%%%%%%%%%%%%%%%%%%%%%%%%%%

\begin{proposition}\label{prop:f-ep on omega}
For any $p \ge 1$, we have
\[  \|(u_{1,\ve},\cdots,u_{n,\ve})\|_{W^{2,p} (\Om) \times\cdots\times W^{2,p}(\Om) } \le C_p
\]
Moreover, if   $f_\ve$ is defined by \eqref{eq:f-epsilon}, then
$ \|f_\ve\|_{C^0(\Omega)}\le C$.
\end{proposition}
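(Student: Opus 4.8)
The plan is to globalize the interior argument of Lemma \ref{lem:f,h C1 bound deg=0}, replacing the local $H^2$-bound used there by the global one from Lemma \ref{lem:H2 bound} and exploiting the boundary condition $f_\ve=0$ on $\pa\Om$ to dispense with cutoff functions entirely. First I would record the basic differential inequality for $f_\ve$. By Lemma \ref{lem:unif conv in L-infty deg=0} we may assume $\sum_{j=1}^n|u_{j,\ve}|^2\ge 1$ on $\Om$ for all small $\ve$, and by Lemma \ref{lem:infty} we have $f_\ve>0$. Feeding these two facts into \eqref{eq:f-epsilon eqn} gives
\[ -\ve^2\Delta f_\ve + 2 f_\ve \le 2\sum_{j=1}^n |\nabla u_{j,\ve}|^2 \qin \Om, \qquad f_\ve = 0 \qon \pa\Om. \]

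Next, for $p>2$ I would multiply this inequality by $f_\ve^{p-1}$ and integrate over all of $\Om$. Since $f_\ve$ vanishes on $\pa\Om$, the integration by parts produces no boundary term and the gradient contribution $\ve^2(p-1)\int_\Om |\nabla f_\ve|^2 f_\ve^{p-2}\,dx$ is nonnegative, so after Hölder's inequality one is left with
\[ 2\int_\Om f_\ve^p\,dx \le 2\,\Big\| \sum_{j=1}^n |\nabla u_{j,\ve}|^2 \Big\|_{L^p(\Om)} \Big( \int_\Om f_\ve^p\,dx \Big)^{(p-1)/p}. \]
The key point is that the global $H^2$-bound of Lemma \ref{lem:H2 bound}, together with the two-dimensional Sobolev embedding $H^1(\Om)\hookrightarrow L^{2p}(\Om)$, controls $\|\nabla u_{j,\ve}\|_{L^{2p}(\Om)}$ and hence bounds $\|\sum_j |\nabla u_{j,\ve}|^2\|_{L^p(\Om)}$ uniformly in $\ve$. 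Dividing then yields $\|f_\ve\|_{L^p(\Om)}\le C_p$ for every finite $p$.

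With $f_\ve$ bounded in every $L^p(\Om)$, I would rewrite the system as \eqref{eq:u,v eqn via f-epsilon} and estimate $\|\Delta u_{j,\ve}\|_{L^p(\Om)} = \|f_\ve u_{j,\ve}\|_{L^p(\Om)} \le \sqrt{n}\,\|f_\ve\|_{L^p(\Om)} \le C$, using $|u_{j,\ve}|\le\sqrt{n}$. Global elliptic $W^{2,p}$-regularity with the smooth Dirichlet data $g_j$ then gives $\|u_{j,\ve}\|_{W^{2,p}(\Om)}\le C_p$, which is the first assertion. Choosing $p>2$ and invoking $W^{2,p}(\Om)\hookrightarrow C^1(\overline{\Om})$ bounds $\sum_j|\nabla u_{j,\ve}|^2$ in $C^0(\overline{\Om})$ uniformly in $\ve$. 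For the $C^0$-bound on $f_\ve$ I would return to the equation \eqref{eq:f-epsilon eqn}, now with uniformly bounded right-hand side, and compare $f_\ve$ with the constant supersolution $M=\tfrac12\big\|2\sum_j|\nabla u_{j,\ve}|^2\big\|_{C^0(\overline{\Om})}$; since $2\sum_j|u_{j,\ve}|^2\ge 2$ and $f_\ve=0\le M$ on $\pa\Om$, the maximum principle for $-\ve^2\Delta + 2\sum_j|u_{j,\ve}|^2$ yields $0\le f_\ve\le M\le C$ on $\Om$.

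The genuinely hard part has in fact already been carried out, namely the boundary $H^2$-estimate of Lemma \ref{lem:H2 bound}, which rests on the Pohozaev-type identity of Lemma \ref{lem:pohozaev}. Granting that, the proof above is a direct global reprise of Lemma \ref{lem:f,h C1 bound deg=0}: the only new feature relative to the interior case is that $f_\ve=0$ on $\pa\Om$ removes the need for the localization $\vp f_\ve$, so no boundary integrals survive the energy estimate and the maximum principle applies on all of $\Om$ without modification.
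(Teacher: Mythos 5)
Your proposal is correct and follows essentially the same route as the paper: a global energy estimate on $f_\ve$ (multiplying the differential inequality by $f_\ve^{p-1}$ with no boundary term since $f_\ve=0$ on $\pa\Om$, combined with the Sobolev consequence of Lemma \ref{lem:H2 bound}) to get $\|f_\ve\|_{L^p}\le C_p$, then global elliptic regularity for the $u_{j,\ve}$ and the maximum principle for the $C^0$-bound on $f_\ve$. The only difference is cosmetic ordering: the paper passes through a $C^{1,\alpha}$-bound on the $u_{j,\ve}$ before applying the maximum principle and then concludes $W^{2,p}$, whereas you obtain $W^{2,p}$ directly from the $L^p$-bound on $f_\ve$ first; both are valid.
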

\begin{proof}
By Lemma \ref{lem:H2 bound} and the Sobolev embedding,
\[ \|(u_{1,\ve},\cdots,u_{n,\ve})\|_{W^{1,p} (\Om) \times\cdots\times W^{1,p}  (\Om) } \le C \quad \forall p \ge 1.
\]
We keep the notation \eqref{eq:f-epsilon}.
By   Lemma \ref{lem:unif conv in L-infty deg=0} and  \eqref{eq:f-epsilon eqn}, we obtain
\begin{equation}
\label{eq:f-ep global}
-2\ve^2\Delta f_\ve+f_\ve\le 4\sum_{j=1}^n|\nabla u_{j,\ve}|^2\quad\mbox{on $\Om$}.
\end{equation}
For $q>1$, since $f_\ve=0$ on $\partial\Om$, multiplying this equation by $f_\ve^{q-1}$, we have
\[\int_\Om f_\ve^qdx\le 4\sum_{j=1}^n \int_\Om  |\nabla u_{j,\ve}|^2f_\ve^{q-1}dx\le 4\sum_{j=1}^n\|\nabla u_{j,\ve}\|^2_{2q}\|f_\ve\|^{q-1}_q,
\]
which implies that $\|f_\ve\|_q\le C$ for some $C=C(n,q)>0$.
Hence,  each $\Delta u_{j,\ve}$ is uniformly bounded in $L^q(\Om)$ for any $q \ge 1$.
By applying the elliptic  regularity  to \eqref{eq:u,v eqn via f-epsilon}, we obtain that
\[ \|(u_{1,\ve},\cdots,u_{n,\ve})\|_{C^{1,\alpha} (\Om) \times\cdots\times C^{1,\alpha}  (\Om) } \le C \quad \forall \al \in (0,1).
\]
By the maximum principle,   we obtain from \eqref{eq:f-ep global} that $ \|f_\ve\|_{C^0(\Omega)}\le C$.
In particular, the right hand side of \eqref{eq:u,v eqn via f-epsilon} is uniformly bounded in $C^0(\Om)$.
Hence, each $u_{j,\ve}$ is also uniformly bounded in $W^{2,p}(\Om)$ for any $p\ge 1$.
\end{proof}

%%%%%%%%%%%%%%%%%%%%%%%%%%%%%%%%%%

%%%%%%%%%%%%%%%%%%%%%%%555
{\bf Proof of Theorem \ref{thm:conv}.}
The proof of \eqref{eq:conv Ck} follows readily from Proposition \ref{prop:conv in H^1} and Proposition \ref{prop:f-ep on omega}.
To show \eqref{eq:conv grad}, let $f_\ve$ be defined by \eqref{eq:f-epsilon} and set
\[ h_\ve=\sum_{j=1}^n |u_{j,\ve}|^2  \qand f_* = \frac{1}{n} \sum_{j=1}^n   |\nabla u_j^*|^2 .
\]
Then, $-\Delta u_j^* = u_j^* f_*$ and $\|h_\ve - n\|_{C^k_{loc}(\Om)} = \ve^2 \|f_\ve\|_{C^k_{loc}(\Om)} \to 0$.
We note that
\[ - \sum_{j=1}^n (u_{j,\ve}  \Delta u_{j,\ve}  - u_j^*\Delta u_j^*) = n(f_\ve-f_*) + f_\ve (h_\ve - n).
\]
Hence, as $\ve \to 0$,
\begin{align*}
& \|f_\ve -f_*\|_{C^k_{loc}(\Om)} \\
\le~&  \frac{1}{n} \Big( \| f_\ve (h_\ve -n)\|_{C^k_{loc}(\Om)} + \sum_{j=1}^n \| u_{j,\ve} \Delta u_{j,\ve} - u_j^*\Delta u_j^* \|_{C^k_{loc}(\Om)} \Big) \to 0.
\end{align*}
This completes the proof.
\qed

%%%%%%%%%%%%%%%%%%

%%%%%%%%%%%%%%%%%%%%%%%%%%%%%%%%%
%%%%%%%%%%%%%%%%%%%%%%%%%%%%%%%%%%%
\section{Further Properties of Solutions}\label{sec:Properties}
\setcounter{equation}{0}

In   this section, we study some properties of solutions of the $n$-component Ginzburg-Landau equations \eqref{eq:semilocal GL} and the generalized harmonic map equations \eqref{eq:u1v1 system}.
First, the next proposition tells us that if $g_j$ is a rotation of $g_1$ for each $j$ and $\ve$ is not so small, then $u_j$ is a rotation of $u_1$ for any solution pair  $(u_1, \cdots, u_n)$ of \eqref{eq:semilocal GL}.
Consequently, each of   $u_j$ and $u_k$ is a rotation of the other.

%%%%%%%%%%%%%%%%%%%%%%%%%%%
\begin{proposition}\label{prop:soln property 1}
Let $(u_1, \cdots, u_n)$ be a solution of \eqref{eq:semilocal GL}.
Assume that for each $j $, there exists $\ga_j \in \rone$ such that $g_j=e^{i\gamma_j} g_1$.
Then,  $u_j=e^{i\gamma_j}  u_1$ for all $\ve>\sqrt{n/\la_1} $.
Here, $\la_1$ is the first eigenvalue of $ -\Delta$ on $\Om$ with the Dirichlet boundary condition.
\end{proposition}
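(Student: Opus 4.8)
The plan is to exploit the fact that all $n$ equations in \eqref{eq:semilocal GL} share the \emph{same} scalar coefficient. Writing $f_\ve = \ve^{-2}\big(n - \sum_{k=1}^n |u_k|^2\big)$, the system reads $-\Delta u_i = f_\ve u_i$ for every $i$, where $f_\ve$ is a real-valued function \emph{independent of $i$}. In particular $e^{i\gamma_j} u_1$ solves exactly the same linear equation as $u_j$, because multiplication by the constant $e^{i\gamma_j}$ commutes both with $-\Delta$ and with multiplication by the real scalar $f_\ve$. This is the observation that drives everything.

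I would therefore set $w_j := u_j - e^{i\gamma_j} u_1$ and note that, by linearity, $w_j$ satisfies
\[
-\Delta w_j = f_\ve w_j \qin \Om, \qquad w_j = g_j - e^{i\gamma_j} g_1 = 0 \qon \pa\Om,
\]
the homogeneous boundary condition being precisely the hypothesis $g_j = e^{i\gamma_j} g_1$. Multiplying by $\overline{w_j}$, integrating over $\Om$, and integrating by parts (legitimate since $w_j$ vanishes on $\pa\Om$ and $f_\ve$ is real) gives the energy identity
\[
\int_\Om |\nabla w_j|^2\, dx = \int_\Om f_\ve |w_j|^2\, dx.
\]

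The quantitative input is elementary: since $\sum_{k=1}^n |u_k|^2 \ge 0$ one has $f_\ve \le n/\ve^2$ pointwise (one does not even need the sharper bound $\sum_k |u_k|^2 < n$ from Lemma \ref{lem:infty}). Combined with the variational characterization of the first Dirichlet eigenvalue, $\int_\Om |\nabla w_j|^2\, dx \ge \la_1 \int_\Om |w_j|^2\, dx$, the energy identity yields
\[
\la_1 \int_\Om |w_j|^2\, dx \le \frac{n}{\ve^2} \int_\Om |w_j|^2\, dx, \qie \Big( \la_1 - \frac{n}{\ve^2} \Big) \int_\Om |w_j|^2\, dx \le 0.
\]
For $\ve > \sqrt{n/\la_1}$ the prefactor $\la_1 - n/\ve^2$ is strictly positive, which forces $w_j \equiv 0$, that is $u_j = e^{i\gamma_j} u_1$.

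There is no genuine obstacle here: the argument is a spectral-gap rigidity estimate, and the threshold $\ve > \sqrt{n/\la_1}$ emerges exactly as the condition that the coefficient size $n/\ve^2$ stays below the bottom of the Dirichlet spectrum of $-\Delta$. The only point requiring mild care is the complex-valued (equivalently $\rtwo$-valued) nature of $w_j$, but this causes no difficulty since $f_\ve$ is real, so the integration by parts and the Poincar\'e inequality apply to the real and imaginary parts separately with the same constant $\la_1$. The essential step remains the first one — recognizing that the common potential $f_\ve$ turns the difference $w_j$ into a solution of a linear homogeneous Dirichlet problem.
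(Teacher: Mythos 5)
Your proof is correct and follows essentially the same route as the paper: form the difference $w_j$ (the paper uses $e^{-i\gamma_j}u_j-u_1$, which differs from yours only by the unimodular factor), observe it solves a linear homogeneous Dirichlet problem with the common real potential, and conclude via the first Dirichlet eigenvalue. The only cosmetic difference is that the paper keeps the nonnegative term $\ve^{-2}\sum_k|u_k|^2|w_j|^2$ explicitly and discards it at the end, whereas you absorb it at once through the pointwise bound $f_\ve\le n/\ve^2$; both yield the same threshold $\ve>\sqrt{n/\la_1}$.
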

\begin{proof}
Set $\tilu_j=e^{-i\ga_j}u_j$.
Then, $\tilu_j$ satisfies
\begin{align*}
-\Delta \tilu_j& = \frac{1}{\ve^2} \tilu_j \Big(n-\sum_{k=1}^n |\tilu_k|^2\Big) \qin \Omega,\\
\tilu_j&=g_1  \qon \pa\Omega.
\end{align*}
If $w_j=\tilu_j-u_1$, then
\begin{align*}
-\Delta w_j + \frac{1}{\ve^2} \sum_{k=1}^n|u_k|^2w_j & =\frac{n}{\ve^2} w_j, \qin \Omega,\\
w_j&=0  \qon \pa\Omega.
\end{align*}
Hence, it follows that
\[ \int_\Om |\nabla w_j|^2+ \frac{1}{\ve^2} \int_\Om \sum_{k=1}^n|u_k|^2|w_j|^2 = \frac{n}{\ve^2}  \int_\Om |w_j|^2 \le \frac{n}{\la_1\ve^2}  \int_\Om |\nabla w_j|^2 .
\]
If $\ve>\sqrt{n/\la_1} $, then $w_j$ is a constant.
Since $w_j=0$ on $\pa \Om$, we have $w_j=0$.
\end{proof}
%%%%%%%%%%%%%%%%%%%%%%%%%%%

It is not clear   whether the conclusion of Proposition \ref{prop:soln property 1}  is   valid for arbitrary $\ve$.

\begin{remark}\label{rmk:alpha beta}
Suppose that $d_j=0$ for all $1\le j \le n$.
Since $  H^1_{g_j}(\Om; S^1)  \neq \emptyset$,  the   minimization problem
\[\alpha (g_1,\cdots, g_n) =  \inf  \Big\{ I(u_1,\cdots,u_n)  : u_j \in H^1_{g_j}(\Om; S^1) \Big\}
\]
is achieved by a unique pair $(u_1^0,\cdots,u_n^0)$  satisfying
\begin{equation}
\label{eq:u0v0 deg=0}
\left\{ \begin{aligned}
          -\Delta u_j^0 & =u_j^0  |\nabla u_j^0|^2 &&\mbox{on } \Om,\\
          |u_j^0|&=1&&\mbox{on } \Om,\\
          u_j^0&=g_j && \mbox{on } \pa \Om.
          \end{aligned}
\right.
\end{equation}
Since $d_j=0$  for  each $j=1,\cdots,n$, we have that  $u_j^0=e^{i\vp_j}$ for a harmonic function  $\vp_j$ in $\Om$.
In general, we have $\alpha  (g_1,\cdots,g_n) \geq \beta (g_1,\cdots,g_n)$ since  $(u_1^0,\cdots,u_n^0) \in \calX(g_1,\cdots,g_n) $.
An interesting question is whether $\alpha  (g_1,\cdots,g_n) = \beta (g_1,\cdots,g_n)$ or not.
The next proposition gives us a necessary  condition that they are equal.
  \end{remark}
  %%%%%%%%%%%%%%

%%%%%%%%%%%%%%%%%%%%%%%%%
 \begin{proposition}\label{prop:soln property 2}
 Assume that $d_j=0$ for all $j\in\{1,\cdots,n\}$.
If $\alpha  (g_1,\cdots,g_n) = \beta (g_1,\cdots,g_n)$ and it is achieved by $(u_1,\cdots,u_n)$, then $|\nabla u_j|=|\nabla u_k|$ for all $j,k\in\{1,\cdots,n\}$.
 \end{proposition}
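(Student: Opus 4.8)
The plan is to exploit that the hypothesis $\alpha = \beta$ forces a single pair $(u_1,\dots,u_n)$ to be a minimizer of \emph{two} different variational problems simultaneously, and then to compare the two Euler--Lagrange systems it must satisfy. First I would take $(u_1,\dots,u_n)$ to be the minimizer furnished by Remark \ref{rmk:alpha beta}; it lies in the restricted class $H^1_{g_1}(\Om;S^1)\times\cdots\times H^1_{g_n}(\Om;S^1)$, so $|u_j|=1$ on $\Om$ and it solves \eqref{eq:u0v0 deg=0}, in particular
\[
-\Delta u_j = u_j |\nabla u_j|^2 \qin \Om, \quad j=1,\dots,n.
\]

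Next I would observe that, since each $|u_j|=1$, we have $\sum_{j=1}^n |u_j|^2 = n$, so $(u_1,\dots,u_n) \in \calX(g_1,\dots,g_n)$ and $I(u_1,\dots,u_n) = \alpha = \beta$. Because $\beta$ is the infimum of $I$ over $\calX(g_1,\dots,g_n)$ and this value is attained at our pair inside $\calX$, the pair is also a global minimizer of $I$ on $\calX(g_1,\dots,g_n)$. Consequently it is a critical point there and must satisfy the $n$-component harmonic map system \eqref{eq:u1v1 system}, namely
\[
-\Delta u_j = \frac{1}{n}\, u_j \sum_{k=1}^n |\nabla u_k|^2 \qin \Om, \quad j=1,\dots,n.
\]
Subtracting the two expressions for $-\Delta u_j$ gives $u_j\big(|\nabla u_j|^2 - \tfrac1n \sum_{k=1}^n |\nabla u_k|^2\big) = 0$ pointwise; since $|u_j|=1$ forces $u_j \neq 0$ everywhere, the real scalar factor must vanish, i.e. $|\nabla u_j|^2 = \tfrac1n \sum_{k=1}^n |\nabla u_k|^2$ for every $j$. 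The right-hand side is independent of $j$, whence $|\nabla u_j| = |\nabla u_k|$ for all $j,k$.

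I expect the only genuinely delicate point to be the second step: justifying that the constrained minimizer (over $S^1$-valued maps) is, under the equality $\alpha=\beta$, automatically a critical point of $I$ on the larger manifold $\calX$, so that \eqref{eq:u1v1 system} genuinely holds for it. This is what makes the unit-modulus equation \eqref{eq:u0v0 deg=0} compatible with the system \eqref{eq:u1v1 system}; note that without $|u_j|=1$ one could not cancel $u_j$, so the argument really does require the minimizer produced by Remark \ref{rmk:alpha beta} rather than an arbitrary element of $\calX$ realizing $\beta$. Once the identification of the two minimizers is in place, the remaining algebra is immediate.
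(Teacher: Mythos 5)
Your proof is correct and rests on the same mechanism as the paper's: the unit-modulus minimizer from Remark \ref{rmk:alpha beta} also attains $\beta$, hence satisfies the system \eqref{eq:u1v1 system}, and combining this with $|u_j|=1$ forces $|\nabla u_j|^2=\tfrac1n\sum_k|\nabla u_k|^2$ for every $j$. The only (cosmetic) difference is that the paper writes $u_j=e^{i\vp_j}$ and separates real and imaginary parts of \eqref{eq:u1v1 system} to get $\Delta\vp_j=0$ and the gradient identity, whereas you subtract the two Euler--Lagrange equations \eqref{eq:u0v0 deg=0} and \eqref{eq:u1v1 system} directly and cancel the nonvanishing factor $u_j$; both are valid and yield the same conclusion.
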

\begin{proof}
By assumption,   $u_1,\cdots,u_n\in H_{g_1}^1(\Omega;S^1)\times\cdots\times H_{g_n}^1(\Omega;S^1)$ and
\[\beta=I(u_1,\cdots,u_n).
\]
Since $|u_j|=1$, we can write $u_j=e^{i\vp_j}$.
By plugging this in \eqref{eq:u1v1 system}, we are led to
\[-i\Delta\vp_j=\Big(\frac{1}{n}-1\Big)|\nabla\vp_j|^2+\frac{1}{n}\sum_{k\neq j}^n|\nabla\vp_k|^2 \qin \Om.
\]
Thus,  for all $j=1\cdots,n$
\[ \Delta\vp_j=0 \qand   |\nabla\vp_j|^2 = \frac{1}{n-1}\sum_{k\neq j}^n|\nabla\vp_k|^2.
\]
In particular,
\[  |\nabla\vp_j|^2 -  |\nabla\vp_1|^2  = \frac{1}{n-1} ( |\nabla\vp_1|^2  -  |\nabla\vp_j|^2 ),
\]
which implies that $ |\nabla\vp_j|^2 =  |\nabla\vp_1|^2$, or equivalently $|\nabla u_j|^2=|\nabla u_1|^2$ for all $ 1\le j \le n$.
\end{proof}
%%%%%%%%%%%%%%%%%%%%

\begin{remark}
Let us  consider the case $n=2$ for simplicity with $d_1=d_2=0$.
We can write $u_j=e^{i\vp_j}$ and $g_j=e^{i\tilde{\vp}_j}$ for $j=1,2$.
According to the proof of Proposition \ref{prop:soln property 2}, if $\al(g_1,g_2)=\beta(g_1,g_2)$, then it is necessary that
\begin{equation}
\label{eq:n=2 alpha beta}
\left\{\begin{aligned}
&\Delta\vp_1=0=\Delta\vp_2 \qon \Om,\\
&|\nabla\vp_1|=|\nabla\vp_2|  \qon \Om,\\
&\vp_1=\tilde\vp_1,~\vp_2=\tilde\vp_2\qon \partial\Om.
\end{aligned}\right.
\end{equation}
So, if there are no solutions for \eqref{eq:n=2 alpha beta}, we may conclude that $\al(g_1,g_2)>\beta(g_1,g_2)$.
For instance, if $\tilvp_1$ is a constant and $\tilvp_2$ is not a constant function, then there are no solution of \eqref{eq:n=2 alpha beta}.
Another example is the case that $\tilvp_1 =\la \tilvp_2$ with $|\la| \ne 1$.
Indeed, if $\tilvp_1 =\la \tilvp_2$, then
\[ -\Delta \Big(\frac{\vp_2}{\la}\Big)=0 ~~\mbox{on}~~ \Om \qand \frac{\vp_2}{\la}=\tilvp_1 ~~\mbox{on} ~~\pa\Om.
\]
Since harmonic maps with the same boundary values are equal, we have $\vp_1=\vp_2 /\la$.
Thus, $|\nabla \vp_1|=|\nabla \vp_2|$ only when $|\la|=1$.
\end{remark}

 As a final subject of this section, we   prove Theorem \ref{thm:properties of min seq}.\\

%%%%%%%%%%%%%%%%%%%%%%%%%%%%%%%
{\bf Proof of Theorem \ref{thm:properties of min seq}:}

(i) Suppose to the contrary that there is $k\in\{1,\cdots,n\}$ such that $d_k>0$ and
\[\frac{1}{\ve^2}\int_\Om\big(1-|u_{k,\ve}|^2\big)^2dx\le C.
\]
By Theorem \ref{thm:conv},  we have $u_{k,\ve}\to u_k^*\in H^1(\Om)$ with $|u_k^*|=1$.
This implies that $u_k^* \in H^1_{g_k} (\Om;S^1) $, which is impossible since $H^1_{g_k} (\Om;S^1)=\emptyset $ if $d_k>0$.

(ii) Suppose that $d_1=\cdots=d_n=0$.
If
\[ \frac{1}{\ve^2}\int_\Om\big(1-|u_{j,\ve}|^2\big)^2dx\le C \quad \text{for all} \quad j=1,\cdots,n,
\]
 then  $u_{j,\ve}\to u_j^*$ in $H^1$ and  $|u_j^*|=1$ for each $j=1,\cdots,n$.
So, we have $\alpha(g_1,\cdots,g_n)=\beta(g_1,\cdots,g_n)$ that contradicts the assumption.

(iii)
If we set
\[X_\ve^2=\frac{1}{\ve^2}\int_\Om \big(1-|u_{1,\ve}|^2\big)^2dx \quad\mbox{and}\quad  Y_\ve^2=\frac{1}{\ve^2}\int_\Om \Big( 1- |u_{2,\ve}|^2\Big)^2dx,
\]
we are led by \eqref{eq:energy upper bd d1d2=0} and  the  Cauchy-Schwartz inequality that
\[\begin{aligned}
C&\ge \frac{1}{\ve^2}\int_\Om \big(2-|u_{1,\ve}|^2-|u_{2,\ve}|^2\big)^2  dx\\
&=X_\ve^2+Y_\ve^2+\frac{2}{\ve^2}\int_\Om \big(1-|u_{1,\ve}|^2\big) \big(1-|u_{2,\ve}|^2\big) dx\\
&\ge \big(X_\ve- Y_\ve\big)^2.
\end{aligned}
\]So, either both $X_\ve $ and $Y_\ve$ are bounded, or they are unbounded.
By (i) and (ii), at least one of $X_\ve $ and $Y_\ve$ is unbounded and we get the desired conclusion.
\qed
%%%%%%%%%%%%%%%%%%%%%%%%
%%%%%%%%%%%%%%%%%%%%%%%%%%%%%
%%%%%%%%%%%%%%%%%%%%%%%%%%%%%%%

%%%%%%%%%%%%%%%%%%%%%%%%%%%%%%%%%%%%%
%%%%%%%%%%%%%%%%%%%%%%%%%%%%%%%%%%%%%%%%%%%%
 \subsubsection*{Acknowledgements.}
 Jongmin Han was supported by Basic Science Research Program through
the National Research Foundation of Korea(NRF) funded by the Ministry of Education (2018R1D1A1B07042681).
Juhee Sohn was supported by the National Research Foundation of Korea(NRF) grant funded by the Korea government(MSIT) (2021R1G1A1003396).

\small
%%%%%%%%%%%%%%%%%%%%%%%%%%%%%%%%%%%%%
%%%%%%%%%%%%%%%%%%%%%%%%%%%%%%%%%%%%%%%%%%%%
 \bibliographystyle{amsplain}

\end{document}